\theoremstyle{plain}
\theoremstyle{definition}
\newtheorem{theorem}{Theorem}[section]
\newtheorem{lemma}[theorem]{Lemma}
\newtheorem{corollary}[theorem]{Corollary}
\newtheorem{definition}[theorem]{Definition}
\newtheorem{note}[theorem]{Note}
\newtheorem{convention}[theorem]{Convention}
\newtheorem{remark}[theorem]{Remark}
\theoremstyle{remark}
\numberwithin{equation}{section}
\newcommand{\SP}{\: \: \: \: \:}
\newcommand{\h}{\mathfrak{h}}
\title{Specification properties on uniform spaces}
\author[{\tiny  F. Ayatollah Zadeh Shirazi, Z. Nili Ahmadabadi, B. Taherkhani and
Kh. Tajbakhsh}]{Fatemah Ayatollah Zadeh Shirazi, Zahra Nili Ahmadabadi, \\  Bahman Taherkhani,
Khosro Tajbakhsh}
\keywords{Fort space, generalized shift,  specification, stroboscopical.}
\subjclass[2010]{37B99, 54H20}
\begin{document}
\begin{abstract}
In the following text we introduce specification property (stroboscopical property) for 
dynamical systems on uniform space.
We focus on  two classes of dynamical systems: generalized shifts and 
dynamical systems with Alexandroff compactification of a discrete space as phase space.
We prove that for a discrete finite topological space $X$ with at least two elements,
a nonempty set $\Gamma$ and a self--map $\varphi:\Gamma\to\Gamma$ the generalized shift
dynamical system $(X^\Gamma,\sigma_\varphi)$:
\begin{itemize}
\item has (almost) weak specification property if and only if $\varphi:\Gamma\to\Gamma$
	does not have any periodic point,
\item has (uniform) stroboscopical property if and only if $\varphi:\Gamma\to\Gamma$
	is one-to-one.
\end{itemize}
\end{abstract}
\maketitle
\section{Introduction}
\noindent The main purpose
of this paper is to generalize tracing properties to uniform spaces and in particular
to compact Hausdorff spaces (one may find similar ideas in \cite{Y92, 2013}). 
\\
The action of a group $G$ on metric space $(Z,d)$ is sensitive if there exists
$\epsilon>0$ such that for all $x\in Z$ and any open neighbourhood $U$ of $x$ there exists 
$y\in U$ and $g\in G$ with $d(gx,gy)\geq\epsilon$. T. Ceccherini--Silberstein and M.  Coornaert \cite{2013}
indicate that the action of a group $G$ on a uniform space $(Y,{\mathcal F})$ is sensitive if there exists
$\alpha\in{\mathcal F}$ such that for all $x\in Y$ and any open neighbourhood $U$ of $x$ there exists 
$y\in U$ and $g\in G$ with $(gx,gy)\notin\alpha$. In this paper we introduce specification properties  for 
dynamical systems on uniform space .
\\
We realize our generalizations in uniform spaces through two classes of dynamical systems
{\it ``generalized shift dynamical systems''} and {\it ``dynamical systems with Alexandroff
compactification of a discrete space as phase space''} on various specification and
stroboscopical properties.
\section{Preliminaries}
\noindent Let's recall that for an arbitrary set $Y$ we call the collection $\mathcal F$ of subsets of $Y\times Y$ a {\it uniform
structure on $Y$} if :
\begin{itemize}
\item $\forall\alpha\in{\mathcal F}\:(\Delta_Y\subseteq\alpha)$;
\item $\forall\alpha,\beta\in{\mathcal F}\:(\alpha\cap\beta\in{\mathcal F})$;
\item $\forall\alpha\in{\mathcal F}\:\exists\beta\in{\mathcal F}\:
    (\beta\circ\beta^{-1}\subseteq\alpha)$;
\item $\forall\alpha\in{\mathcal F}\:
    \forall\beta\subseteq Y\times Y\:(\alpha\subseteq\beta\Rightarrow\beta\in{\mathcal F})$.
\end{itemize}
where for $\alpha,\beta\subseteq X\times X$ we have
$\alpha^{-1}=\{(y,x):(x,y)\in\alpha\}$ and $\alpha\circ\beta=\{(x,z):\exists y\:((x,y)\in\beta\wedge (y,z)\in\alpha)
\}$ also $\Delta_Y=\{(x,x):x\in Y\}$) \cite{dug}.\\
\\
Moreover, for all $\alpha\in\mathcal F$ and $x\in Y$ let $\alpha[x]=\{y\in Y:(x,y)\in\alpha\}$.
For  uniform structure $\mathcal F$ on
$Y$, the collection  $\{U\subseteq Y:\forall x\in Y\:\exists\alpha\in{\mathcal F}\:
(\alpha[x]\subseteq U)\}$ is a topology on $Y$, we call it the {\it uniform topology} induced from $\mathcal F$,
so accordingly we denote this topological space by uniform topological space $(Y,{\mathcal F})$
whenever we want to emphasize  $\mathcal F$.
We call the topological space $Z$ {\it uniformizable} if there exists a
uniform structure $\mathcal G$ in $Z$
such that uniform topology induced from $\mathcal G$ coincides with original topology on $Z$, and
in this case we call $\mathcal G$ {\it compatible uniform structure on $Z$}.
Every compact Hausdorff (and in particular, compact metric)
space is uniformizable and has a unique compatible uniform structure.
\\
Note to the fact that compact metric space$(Z,d)$ has a unique 
compatible uniform structure $\{D\subseteq Z\times Z:\exists\kappa>0\:
(\{(x,y)\in Z\times Z:
d(x,y)<\kappa\}\subseteq D)\}$.
\\
In this text by a dynamical system $(Y,h)$ we mean a topological space $Y$
and continuous map $h:Y\to Y$.
Similar to the definition of specification properties in metric spaces 
\cite{D90} we have the following
definition.
\begin{definition}\label{def10}
For a uniform topological
	space $(Y,\mathcal{F})$, we say that the dynamical system
	$((Y,\mathcal{F}),h)$ (or briefly $(Y,h)$) has {\it almost weak specification property}
	if for all $\alpha\in\mathcal F$
there exists a map $N_\alpha:{\mathbb Z}_+\to{\mathbb Z}_+$ 
with ${\displaystyle\lim_{n\to\infty}\dfrac{N_\alpha(n)}{n}}=0$ 
such that for all $y_1,\ldots,y_n\in  Y$ ($n\geq2$) and
$0\leq l_1\leq k_1<l_2\leq k_2<\ldots<l_n\leq k_n$ with 
$l_{i+1}-k_i\geq N_\alpha(k_{i+1}-l_{i+1})$ 
(for all $i=1,\ldots,n-1$) there exists $x\in Y$ 
such that 
\[(h^i(x),h^i(y_j))\in\alpha\: \forall j\in\{1,\ldots,n\}\forall i\in\{l_j,l_j+1,\ldots,k_j\}\:.\]
If  in the above definition we suppose in addition that $N_\alpha$ is a constant map
(resp. and $x$ is a periodic point of $h$),
then we say that $((Y,\mathcal{F}),h)$ has {\it weak specification property}
(resp. {\it specification property}).
\end{definition}
\noindent Let's recall that in the dynamical system $(Z,f)$, for $x\in Z$ and
strictly increasing  sequence $A=\{n_i\}_{i\geq0}$ in $\mathbb N$
 we set $\omega_f(A,x)=\{y\in Z:\{f^{n_i}(x)\}_{i\geq0}$ has a subsequence converging to $y\}$.
 Now for definition of various stroboscopical properties see \cite{LS03}.
\begin{definition}\label{def20}
 We say that $(Z,f)$ has stroboscopical property (resp. strongly stroboscopical property)
 if for all $z\in Z$ and every
 strictly increasing  sequence $A=\{n_i\}_{i\geq0}$ in $\mathbb N$,
 $\{x\in Z:z\in\omega_f(A,x)\}$ is a nonempty (resp. dense) subset of $Z$.
 Moreover if $(Z,\mathcal F)$ is a uniform space and for all 
 strictly increasing  sequence $A=\{n_i\}_{i\geq0}$ in $\mathbb N$ there exists
 a subsequence $B=\{k_i\}_{i\geq 0}$ of $A$ and $\varrho:Z\to Z$
 such that the sequence $\{f^{k_i}\circ\varrho\}_{i\geq0}$ converges
 uniformly to $id_Z$, then we say that $((Z,\mathcal F),f)$ has uniform stroboscopical property. 
  \end{definition}
\subsection*{Generalized shifts}
The one sided shift $\mathop{\{1,\ldots,k\}^{\mathbb N}\to\{1,\ldots,k\}^{\mathbb N}}\limits_{
(x_n)_{n\geq1}\mapsto(x_{n+1})_{n\geq1}}$
and two sided shift $\mathop{\{1,\ldots,k\}^{\mathbb Z}\to\{1,\ldots,k\}^{\mathbb Z}}\limits_{
(x_n)_{n\in{\mathbb Z}}\mapsto(x_{n+1})_{n\in{\mathbb Z}}}$
due to P. Walters \cite{walters} have been investigated very well from the ergodic theory and dynamical systems point of 
view. 
For nonempty sets $X$ and $\Gamma$ 
and arbitrary map $\varphi:\Gamma\to\Gamma$,
we call $\sigma_\varphi:X^\Gamma\to X^\Gamma$ with
$\sigma_\varphi((x_\alpha)_{\alpha\in\Gamma})=(x_{\varphi(\alpha)})_{\alpha\in\Gamma}$
(for $(x_\alpha)_{\alpha\in\Gamma}\in X^\Gamma$), a {\it generalized shift}.
Whenever $X$ is a topological space and 
$X^\Gamma$ is equiped with product (pointwise convergence) topology, it's evident that 
$\sigma_\varphi:X^\Gamma\to X^\Gamma$ is continuous and one may consider
dynamical system $(X^\Gamma,\sigma_\varphi)$.
Generalized shifts has been introduced for the first time in \cite{AHK}, moreover
their dynamical properties have been studied in several texts, like
\cite{ANT13} and \cite{gio}.
\begin{convention}
Suppose that $X$ is a finite discrete topological space 
with at least two elements, $\Gamma$ is a nonempty set, and  $\varphi:\Gamma\to\Gamma$
is an arbitrary map. For $H\subseteq \Gamma$ let:
\[\alpha_H:=\{((x_\alpha)_{\alpha\in\Gamma},(y_\alpha)_{\alpha\in\Gamma})\in 
X^\Gamma\times X^\Gamma:\forall\alpha\in H\:(x_\alpha=y_\alpha)\}\:,\]
so 
\begin{center}
${\mathcal U}:=\{D\subseteq X^\Gamma\times X^\Gamma:$ there exists a finite subset
$H$ of $\Gamma$ with $\alpha_H\subseteq D\}$ 
\end{center}
is the unique compatible uniform structure on $X^\Gamma$
(which is equipped with product topology).
Note that, the compact Hausdorff topological space $X^\Gamma$ is metrizable if and only if
$\Gamma$ is countable.
\end{convention}
\subsection*{Alexandroff compactification and Fort spaces}
For a  topological
space $Y$ choose a point $b\notin Y$ and consider the set $A(Y):=Y\cup\{b\}$ with the topology
$\{U\subseteq Y:U$ is an open subset of $Y\}\cup\{V\subseteq A(Y):Y\setminus V$
is a closed compact subset of $Y\}$, then
we call $A(Y)$ the {\it Alexandroff compactification} or {\it one point compactification of $Y$} \cite{kel, steen}.
Obviously one may study a dynamical system 
with Alexandroff compactification of a topological space as phase 
space. Now suppose $b\in F$ and consider $F$ with the topology:
\[\{U\subseteq F:b\notin U\vee(F\setminus U {\rm \: is \: finite})\}\:,\]
then we say that $F$ is a {\it Fort space (with particular point $b$)} \cite[Counterexample 24]{steen}. 
It's evident that 
the class of Fort spaces is just the class of Alexandroff compactification of discrete spaces.
\begin{convention}
From now on,  consider $F$ is a Fort space with the particular point $b$ and
$\h:F\to F$ is continuous. For $H\subseteq F$
let:
\[\gamma_H:=((F\setminus H)\times(F\setminus H))\cup\Delta_H\]
so 
\begin{center}
${\mathcal K}:=\{D\subseteq F\times F:$ there exists a finite subset
$H$ of $F\setminus\{b\}$ with $\gamma_H\subseteq D\}$ 
\end{center}
is the unique compatible uniform structure on $F$.
Note that, the compact Hausdorff topological space $F$ is metrizable if and only if
$F$ is countable. 
\end{convention}
\begin{remark}\label{fort10}
In Fort spaces $Y_1$ with particular point $b_1$ and $Y_2$ with particular point $b_2$,
$f:Y_1\to Y_2$ is continuous if and only if one of the following conditions holds true \cite[Lemma 5.1]{AD}:
\begin{itemize}
\item $f(b_1)=b_2$ and $f^{-1}(y)$ is finite, for all $y\in Y_2\setminus\{b_2\}$,
\item $f(b_1)\neq b_2$ and $Y_1\setminus f^{-1}(f(b_1))$ is finite.
\end{itemize}
\end{remark}
\section{Weak specification property}
\noindent In this section we show that $(X^\Gamma,\sigma_\varphi)$ has weak specification property
(almost weak specification property) if and only if 
$\varphi:\Gamma\to\Gamma$ does not have any periodic point.
Moreover $(X^\Gamma,\sigma_\varphi)$ has specification property,
if and only if $\varphi:\Gamma\to\Gamma$ is one--to--one without any periodic point.
On the other hand $(F,\h)$ has  weak specification property
(almost weak specification property) if and only if 
$\bigcap\{\h^n(F):n\geq1\}$ is singleton. Finally,  $(F,\h)$ has  specification property
if and only if $F$ is singleton.
\begin{lemma}\label{salam40}
If $\varphi:\Gamma\to\Gamma$ has a periodic point, then the dynamical system 
$(X^\Gamma,\sigma_\varphi)$ does not have almost weak specification property.
\end{lemma}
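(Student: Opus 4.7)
The plan is to locate an entourage that detects the value of a single ``frozen'' coordinate and use it to derive a contradiction from the almost weak specification hypothesis. The key observation is that if $\gamma_0\in\Gamma$ is periodic for $\varphi$ with period $p$, then for every $x\in X^\Gamma$ and every $m\in\mathbb N$ one has $\sigma_\varphi^{mp}(x)_{\gamma_0}=x_{\varphi^{mp}(\gamma_0)}=x_{\gamma_0}$, so the $\gamma_0$-coordinate of any orbit is $p$-periodic. This rigidity makes it impossible to prescribe this coordinate independently at two well-separated times.

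To exploit this, I would take $\alpha:=\alpha_{\{\gamma_0\}}\in\mathcal U$, fix distinct $a,b\in X$ (available because $|X|\geq 2$), and consider the two constant points $y_1=(a)_{\gamma\in\Gamma}$ and $y_2=(b)_{\gamma\in\Gamma}$ in $X^\Gamma$. Given any candidate gauge $N_\alpha$, I would choose $m$ with $mp\geq N_\alpha(0)$ and set $n=2$, $l_1=k_1=0$, $l_2=k_2=mp$. The gap condition $l_2-k_1\geq N_\alpha(k_2-l_2)$ is then automatic, so almost weak specification would have to provide a shadowing point $x\in X^\Gamma$. Reading off the required agreement at coordinate $\gamma_0$ at times $0$ and $mp$ forces $x_{\gamma_0}=a$ and, via $\varphi^{mp}(\gamma_0)=\gamma_0$, also $x_{\gamma_0}=b$, which is the desired contradiction.

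I do not expect any real obstacle: the asymptotic condition $N_\alpha(n)/n\to 0$ plays no role because $N_\alpha$ is only evaluated at $0$, and the $n\geq 2$ clause in Definition~\ref{def10} is met by the two constant configurations. The only genuine insight required is the selection of the ``one-coordinate'' entourage $\alpha_{\{\gamma_0\}}$, chosen precisely so that the forced $p$-periodicity of $x_{\gamma_0}$ collides head-on with the freedom that almost weak specification would otherwise grant us on the two widely separated time blocks.
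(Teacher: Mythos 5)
Your proof is correct and takes essentially the same route as the paper's: the single-coordinate entourage $\alpha_{\{\gamma_0\}}$, two constant configurations as targets, and a two-block specification request whose second block sits at a multiple of the period so that the frozen coordinate $x_{\gamma_0}$ is forced to equal two distinct values. The only (harmless) difference is that you take $l_2=k_2=mp$, so $\varphi^{l_2}(\gamma_0)=\gamma_0$ applies immediately, whereas the paper uses a second block of length $m$ and then locates an exponent $j\equiv 1\ (\mathrm{mod}\ m)$ inside it; note also that evaluating $N_\alpha$ at $0$ is legitimate here, since the paper itself does the same in the proof of Theorem~\ref{AF10}.
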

\begin{proof}
Suppose that $\lambda$ is a periodic point of $\Gamma$, then there exists $m\geq1$ with
$\varphi^m(\lambda)=\lambda$. Choose distinct $p,q\in X$ and consider $\alpha:=\alpha_{\{\lambda\}}\in\mathcal U$.
If $(X^\Gamma,\sigma_\varphi)$  has 
almost weak specification property, then there exists 
$N_\alpha:{\mathbb Z}_+\to{\mathbb Z}_+$ such that for all $0\leq l_1\leq k_1<l_2\leq k_2$
with $l_2-k_1\geq N_\alpha(k_2-l_2)$ there exists
$x\in X^\Gamma$ with the property:
\[(\sigma_\varphi^i(x),\sigma_\varphi^i(y_j))\in\alpha\: \forall 
j\in\{1,2\}\forall i\in\{l_j,l_j+1,\ldots,k_j\}\:,\]
for $y_1=(p)_{\theta\in\Gamma}$ and $y_2=(q)_{\theta\in\Gamma}$.
In particular for $l_1=k_1=1,l_2=N_\alpha(m)+2,k_2=N_\alpha(m)+2+m$
there exists $(x_\theta)_{\theta\in\Gamma}$ such that:
\[((x_{\varphi(\theta)})_{\theta\in\Gamma},(p)_{\theta\in\Gamma})\in\alpha\:,\]
and 
\[((x_{\varphi^i(\theta)})_{\theta\in\Gamma},(q)_{\theta\in\Gamma})\in\alpha\:
\forall i=N_\alpha(m)+2,\ldots,N_\alpha(m)+2+m\:,\]
which imply that $x_{\varphi(\lambda)}=p$ and $x_{\varphi^i(\lambda)}=q$
for all $i=N_\alpha(m)+2,\ldots,N_\alpha(m)+2+m$.
Choose $j\in\{N_\alpha(m)+2,\ldots,N_\alpha(m)+2+m\}$ such that there exists $n\geq1$
with $j=nm+1$, thus $\varphi^j(\lambda)=\varphi(\lambda)$ and 
$q=x_{\varphi^j(\lambda)}=x_{\varphi(\lambda)}=p$ which is a contradiction with 
$p\neq q$. Therefore $(X^\Gamma,\sigma_\varphi)$ 
does not have almost weak specification property.
\end{proof}
\begin{note}
For a map $\varphi:\Gamma\to\Gamma$ and $\alpha,\beta\in\Gamma$ 
we say  $\alpha\thicksim_\varphi\beta$ if there exists $i,j\geq0$
with $\varphi^i(\alpha)=\varphi^j(\beta)$. Then $\thicksim_\varphi$ is an equivalence relation
on $\Gamma$.
\end{note}
\begin{lemma}\label{salam50}
If $\varphi:\Gamma\to\Gamma$ does not have any periodic point, then the dynamical system 
$(X^\Gamma,\sigma_\varphi)$ has weak specification property.
\end{lemma}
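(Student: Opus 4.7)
The plan is to verify the weak specification property directly from the definition, reducing to basic entourages and then carrying out a combinatorial argument on forward orbits of $\varphi$. Since every $\alpha\in\mathcal U$ contains some $\alpha_H$ with $H\subseteq\Gamma$ finite, it suffices to find, for each finite $H$, a constant $N_H$ that works. The crucial unpacking is: $(\sigma_\varphi^i(x),\sigma_\varphi^i(y))\in\alpha_H$ iff $x_\eta=y_\eta$ for every $\eta\in\varphi^i(H)$. So given data $y_1,\ldots,y_n$ and time windows $[l_j,k_j]$, the required $x$ exists provided we can glue: set $S_j:=\bigcup_{i=l_j}^{k_j}\varphi^i(H)$ and note that any $x$ with $x|_{S_j}=y_j|_{S_j}$ for each $j$ does the job, and such $x$ certainly exists if the $S_j$ are pairwise disjoint (fill in arbitrary values off $\bigcup_j S_j$).

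The heart of the proof is the following claim about $\varphi$. Absence of periodic points makes each orbit map $i\mapsto\varphi^i(\theta)$ injective (a repetition would create a periodic point). Consequently, for $\theta,\theta'\in\Gamma$ with $\theta\neq\theta'$ whose forward orbits meet, if $\varphi^{a_1}(\theta)=\varphi^{b_1}(\theta')$ and $\varphi^{a_2}(\theta)=\varphi^{b_2}(\theta')$, then applying $\varphi^{a_2-a_1}$ (assuming WLOG $a_1\leq a_2$) to the first equation and invoking injectivity of the orbit of $\theta'$ yields $a_1-b_1=a_2-b_2$. Hence there is a well-defined integer $d(\theta,\theta')$ such that every intersection pair $(a,b)$ satisfies $a-b=d(\theta,\theta')$. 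Set
\[D_H:=\max\{|d(\theta,\theta')|:\theta,\theta'\in H,\ \theta\neq\theta',\ \text{orbits intersect}\},\]
interpreted as $0$ when the set is empty; this is finite since $H$ is finite.

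I then take $N_H:=D_H+1$ as the constant witnessing weak specification for $\alpha_H$. If $|i-i'|\geq N_H$ and $\eta\in\varphi^i(H)\cap\varphi^{i'}(H)$, write $\eta=\varphi^i(\theta)=\varphi^{i'}(\theta')$ for some $\theta,\theta'\in H$: the case $\theta=\theta'$ forces $i=i'$ by orbit injectivity, contradicting $|i-i'|\geq 1$, while $\theta\neq\theta'$ forces $|i-i'|=|d(\theta,\theta')|\leq D_H<N_H$, again a contradiction. So $\varphi^i(H)\cap\varphi^{i'}(H)=\emptyset$ whenever $|i-i'|\geq N_H$. Given the spacing hypothesis $l_{j+1}-k_j\geq N_H$, an immediate induction gives $l_{j'}-k_j\geq N_H$ for all $j<j'$, hence $S_j\cap S_{j'}=\emptyset$, and the gluing described in the first paragraph produces the required $x$.

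The main technical obstacle is establishing the uniqueness of the offset $d(\theta,\theta')$, since this is where the no-periodic-point hypothesis is actually deployed; the rest is bookkeeping and reduction to a basic entourage. A minor point to watch is to ensure $D_H$ is genuinely finite (which follows from finiteness of $H\times H$ and the fact that each pair contributes at most one value of $d$); once that is in place, the constancy of $N_\alpha=N_H$ and thus the weak (rather than merely almost weak) specification property falls out immediately.
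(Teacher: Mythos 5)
Your proof is correct, and its overall architecture coincides with the paper's: reduce to a basic entourage $\alpha_H$, build the tracing point by copying the coordinates of $y_j$ along $\varphi^i(H)$ for $i$ in the $j$-th window and filling in a constant elsewhere, and use the absence of periodic points to show the gluing is consistent. The one place you genuinely diverge is in how the constant $N$ is extracted: the paper chooses representatives $\beta_1,\dots,\beta_m$ of the $\thicksim_\varphi$-classes meeting the finite set, together with a uniform time $N$ by which every element of the finite set reaches its representative, and then derives a periodic point from any conflict in the assignment; you instead isolate the cleaner observation that two distinct points with intersecting forward orbits can only collide at a single well-defined time offset $d(\theta,\theta')$, which gives the pairwise disjointness $\varphi^i(H)\cap\varphi^{i'}(H)=\varnothing$ for $|i-i'|\geq N_H$ directly. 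Your version avoids having to justify the existence of common forward meeting points for finitely many $\thicksim_\varphi$-equivalent elements (a step the paper asserts without proof), at the cost of a short uniqueness argument for the offset; both hinge on exactly the same use of the no-periodic-point hypothesis, namely the injectivity of $n\mapsto\varphi^n(\theta)$.
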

\begin{proof}
Suppose $\varphi:\Gamma\to\Gamma$ does not have any periodic. For $\alpha\in\mathcal U$
there exists a finite subset $F$ of $\Gamma$ with $\alpha_F\subseteq\alpha$. Also, there exist
$\beta_1,\ldots,\beta_m\in\Gamma$  and $N\geq1$  such that
$F\subseteq\bigcup\{\varphi^{-i}(\beta_j):j=1,\ldots,m,i=0,\ldots,N\}$ and
for all distinct $s,t\in\{1,\ldots,m\}$,
$\beta_s\not\thicksim_\varphi\beta_t$. Now consider $x_1=(x^1_\theta)_{\theta\in\Gamma},
\ldots,x_k=(x^k_\theta)_{\theta\in\Gamma}\in X^\Gamma$ and
$0\leq i_1\leq j_1<i_2\leq j_2<\cdots<i_k\leq j_k$ with $i_{s+1}-j_s\geq N+1$
for all $s\in\{1,\ldots,k-1\}$. Choose $p\in X$ and define 
$z=(z_\theta)_{\theta\in\Gamma}\in X^\Gamma$ in the following way:
\[ z_\theta=\left\{\begin{array}{lc}
x^s_{\varphi^t(\gamma)} & 1\leq s\leq k\wedge i_s\leq t\leq j_s\wedge \gamma\in F \wedge \theta=\varphi^t(\gamma) \\
p & {\rm otherwise}\end{array}
\right.\]
First of all note that $z_\theta$s are well-defined, otherwise there exist $\gamma_1,\gamma_2\in F$,
$s_1<s_2$, and $t_1<t_2$ with $i_{s_1}\leq t_1\leq j_{s_1}<i_{s_2}\leq t_2\leq  j_{s_2}$ 
with
$\varphi^{ t_1}(\gamma_1)=\varphi^{ t_2}(\gamma_2)$,
 thus $\gamma_1\thicksim_\varphi\gamma_2$ and there exists $i\in\{1,\ldots,m\}$
 and $r_1,r_2\in\{0,\ldots,N\}$ with $\varphi^{r_1}(\gamma_1)=
 \varphi^{r_2}(\gamma_2)=\beta_i$, so by $i_{s_2}-j_{s_1}\geq N+1$,
 $t_2>N$ and $\varphi^{ t_1}(\gamma_1)=\varphi^{ t_2}(\gamma_2)=
 \varphi^{ t_2-r_2}(\beta_i)=\varphi^{ t_2-r_2+r_1}(\gamma_1)$. By  $ t_2-r_2+r_1\geq t_2-N\geq i_{s_2}-N>j_{s_1}\geq t_1$, we have
 $ t_2-r_2+r_1>t_1$ and $\varphi^{ t_1}(\gamma_1)$ is a periodic point of $\varphi$,
 which is a contradiction, so $z_\theta$s are well-defined.
 \\
It is easy to see that for all $s\in\{1,\ldots,k\}$, $\gamma\in F$ and 
$i_s\leq t\leq j_s$ we have $x^s_{\varphi^t(\gamma)}=z_{\varphi^t(\gamma)}$, i.e.
$(\sigma_\varphi^t(z),\sigma_\varphi^t(x^s))\in\alpha_F\subseteq\alpha$,
which completes the proof.
\end{proof}
\begin{theorem}\label{salam60}
The following statements are equivalent:
 \begin{itemize}
 \item $(X^\Gamma,\sigma_\varphi)$ has weak specification property,
 \item $(X^\Gamma,\sigma_\varphi)$ has almost weak specification property,
 \item  $\varphi:\Gamma\to\Gamma$ does not have any periodic point.
 \end{itemize}
\end{theorem}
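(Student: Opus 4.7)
The plan is to close the equivalence by chaining together the two lemmas that have just been proved, using only the trivial implication that weak specification is a stronger notion than almost weak specification. Concretely, I would establish the cycle weak specification $\Rightarrow$ almost weak specification $\Rightarrow$ $\varphi$ has no periodic point $\Rightarrow$ weak specification.

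For the first arrow I would observe that it is built into Definition \ref{def10}: if $N_\alpha$ is a constant map $c$, then trivially $\lim_{n\to\infty} N_\alpha(n)/n = \lim_{n\to\infty} c/n = 0$, so every system with weak specification automatically has almost weak specification. The second arrow is exactly the contrapositive of Lemma \ref{salam40}: that lemma shows that the existence of a periodic point of $\varphi$ rules out almost weak specification, so if $(X^\Gamma,\sigma_\varphi)$ has almost weak specification then $\varphi$ must be periodic-point-free. The third arrow is the content of Lemma \ref{salam50}.

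I expect no real obstacle here, since the content has been done in the two preceding lemmas; the only thing worth being careful about is making the trivial first arrow explicit (namely, that a constant function satisfies the $\lim N_\alpha(n)/n=0$ condition, so weak specification is genuinely a special case of almost weak specification rather than an independent notion). Everything else is a matter of assembling the cycle and pointing to the appropriate reference.
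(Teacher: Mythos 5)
Your proposal is correct and follows exactly the paper's own proof, which simply cites Lemmas~\ref{salam40} and~\ref{salam50}; your cycle (weak $\Rightarrow$ almost weak $\Rightarrow$ no periodic point $\Rightarrow$ weak) is precisely how those two lemmas combine, and your observation that a constant $N_\alpha$ satisfies $\lim_{n\to\infty}N_\alpha(n)/n=0$ correctly supplies the trivial first implication that the paper leaves implicit.
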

\begin{proof}
Use Lemmas~\ref{salam40} and ~\ref{salam50}.
\end{proof}
\begin{corollary}\label{salam70}
The following statements are equivalent:
 \begin{itemize}
 \item[1.] $(X^\Gamma,\sigma_\varphi)$ has specification property,
  \item[2.]  $\varphi:\Gamma\to\Gamma$ is one--to--one without any periodic point.
 \end{itemize}
\end{corollary}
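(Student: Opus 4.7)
Theorem~\ref{salam60} immediately yields half of each implication: specification implies almost weak specification, hence $\varphi$ has no periodic point; conversely, the absence of periodic points gives weak specification via Lemma~\ref{salam50}. The remaining work is to relate periodicity of the tracking point to injectivity of $\varphi$.

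For the direction (1)$\Rightarrow$(2), the plan is to observe that any $\sigma_\varphi$-periodic $x$ must identify coordinates that $\varphi$ collapses: if $\sigma_\varphi^n(x)=x$ and $\varphi(\lambda_1)=\varphi(\lambda_2)$, then
\[x_{\lambda_1}=x_{\varphi^n(\lambda_1)}=x_{\varphi^{n-1}(\varphi(\lambda_1))}=x_{\varphi^{n-1}(\varphi(\lambda_2))}=x_{\varphi^n(\lambda_2)}=x_{\lambda_2}.\]
If $\varphi$ were not one-to-one, fix distinct $\lambda_1,\lambda_2\in\Gamma$ with $\varphi(\lambda_1)=\varphi(\lambda_2)$ and distinct $p,q\in X$; take $\alpha:=\alpha_{\{\lambda_1,\lambda_2\}}$, $y_1\in X^\Gamma$ with $(y_1)_{\lambda_1}=p$ and $(y_1)_{\lambda_2}=q$, and any $y_2\in X^\Gamma$. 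Specification with $l_1=k_1=0$ and $l_2=k_2=N_\alpha$ produces a periodic $x$ with $x_{\lambda_1}=p\neq q=x_{\lambda_2}$, contradicting the identity above.

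For (2)$\Rightarrow$(1), the plan is to upgrade the construction in Lemma~\ref{salam50} so that the witness is periodic. When $\varphi$ is injective without periodic points, its functional graph decomposes into disjoint chains, each isomorphic via iteration of $\varphi$ to $\mathbb N$ or $\mathbb Z$. Given $\alpha$, the $y_s$ and intervals $[i_s,j_s]$ as in that lemma, the constrained support $\Theta:=\{\varphi^t(\gamma):1\le s\le k,\,i_s\le t\le j_s,\,\gamma\in F\}$ is finite, meets only finitely many chains, and within each chain sits inside a window of some bounded length $M$. Choose any period $n>M$; this is permitted because specification only requires the witness to be some periodic point, with period allowed to depend on the data.

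Within each chain that meets $\Theta$, I would assign $z$ the value dictated by Lemma~\ref{salam50}'s formula at each constrained index — the well-definedness argument in that lemma uses only no-periodic-points, so it still applies — set $z$ to an arbitrary default $p\in X$ on all other residues modulo $n$ within the chain, and extend periodically with period $n$; on chains disjoint from $\Theta$, take $z$ constant. The choice $n>M$ ensures that distinct constrained indices in a single chain have distinct residues modulo $n$, so the periodic extension respects all prescribed values and $\sigma_\varphi^n(z)=z$ by construction, while $z$ still tracks each $y_s$ on $[i_s,j_s]$. The main obstacle is precisely this compatibility check; injectivity of $\varphi$ is what keeps constraints from different chains from colliding under iteration, and its failure is the very mechanism exploited in the forward direction.
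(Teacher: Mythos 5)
Your proposal is correct, but both halves run along a genuinely different track from the paper's. For (1)$\Rightarrow$(2) the paper only uses specification to conclude that the periodic points of $\sigma_\varphi$ are dense in $X^\Gamma$ and then invokes \cite[Theorem 2.6]{ANT13} to get injectivity of $\varphi$ (plus Lemma~\ref{salam40} for the absence of periodic points); you instead prove injectivity directly from the observation that any $\sigma_\varphi$-periodic point must satisfy $x_{\lambda_1}=x_{\lambda_2}$ whenever $\varphi(\lambda_1)=\varphi(\lambda_2)$, which is a correct, self-contained argument (only cosmetic care is needed so that $l_2>k_1$, e.g.\ take $l_2=k_2=N_\alpha+1$). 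For (2)$\Rightarrow$(1) the paper does \emph{not} rebuild the tracing point: it takes the point $x$ furnished by weak specification for an entourage $\beta$ with $\beta^{-1}\circ\beta\subseteq\alpha$, uses density of periodic points (again via \cite[Theorem 2.6]{ANT13}) together with uniform continuity of $\sigma_\varphi$ to find a periodic $z$ shadowing $x$ on $\{0,\dots,k_n\}$, and composes the two relations. You instead periodize the explicit construction of Lemma~\ref{salam50} along the $\mathbb N$- and $\mathbb Z$-chains of the injective, periodic-point-free $\varphi$, choosing the period $n$ larger than the length of the window containing all constrained positions in any single chain so that distinct constraints land in distinct residue classes; this is sound (the period of the witness is allowed to depend on the data, and the well-definedness argument of Lemma~\ref{salam50} indeed only uses the absence of periodic points), though it is a sketch whose window/residue bookkeeping would need to be written out. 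The trade-off: the paper's route is shorter but leans twice on the external density result, while yours is self-contained and makes transparent why injectivity is exactly the dividing line — non-injectivity forces coordinate identifications on every periodic point, and injectivity gives the chain structure that makes the periodic extension consistent.
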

\begin{proof}
``(1) $\Rightarrow$ (2)'' Suppose $(X^\Gamma,\sigma_\varphi)$ 
has specification property, then for all $x\in X^\Gamma$
and open neighbourhood $U$ of $x$ there exists $\alpha\in\mathcal U$ with
$\alpha[x]\subseteq U$. Since
$(X^\Gamma,\sigma_\varphi)$ 
has specification property, there exists a periodic point $z$ of $\sigma_\varphi$ with
$(x,z)\in\alpha$, i.e. $z\in\alpha[x]\subseteq U$ and the collection of all
periodic points of $\sigma_\varphi$ is dense in $X^\Gamma$. By \cite[Theorem 2.6]{ANT13}
$\varphi:\Gamma\to\Gamma$ is one--to--one and using 
Lemma~\ref{salam40} $\varphi:\Gamma\to\Gamma$ does not have
any periodic point.
\\
``(2) $\Rightarrow$ (1)'' Suppose  $\varphi:\Gamma\to\Gamma$ is one--to--one 
without any periodic point. By using Theorem~\ref{salam60}, we get that  
$(X^\Gamma,\sigma_\varphi)$ has weak specification property. Consider
$\alpha\in\mathcal U$, there exists $\beta\in\mathcal U$ with
$\beta^{-1}\circ\beta\subseteq\alpha$, moreover since
$(X^\Gamma,\sigma_\varphi)$ has weak specification property
there exists $N\geq1$
such that for all $y_1,\ldots,y_n\in  X^\Gamma$ ($n\geq2$) and
$0\leq l_1\leq k_1<l_2\leq k_2<\ldots<l_n\leq k_n$ with 
$l_{i+1}-k_i\geq N$ 
(for all $i=1,\ldots,n-1$) there exists $x\in X^\Gamma$
such that 
\[(\sigma_\varphi^i(x),\sigma_\varphi^i(y_j))\in\beta\: 
\forall j\in\{1,\ldots,n\}\forall i\in\{l_j,l_j+1,\ldots,k_j\}\:.\]
Since $\sigma_\varphi:X^\Gamma\to X^\Gamma$ is uniformly continuous,
there exists $\mu\in\mathcal U$ with:
\[(u,v)\in\mu\Rightarrow(\sigma_\varphi^i(u),\sigma_\varphi^i(v))\in\beta \:(\forall
i\in\{0,1,\ldots,k_n\}\:.\]
Since  $\varphi:\Gamma\to\Gamma$ is one--to--one, the collection of all
periodic points of $\sigma_\varphi$ is dense in $X^\Gamma$ by \cite[Theorem 2.6]{ANT13}.
Hence
there exists a periodic point $z$ of $\sigma_\varphi$ with $(x,z)\in\mu$, thus
$(\sigma_\varphi^i(x),\sigma_\varphi^i(z))\in\beta$ for all $i\in\{0,1,\ldots,k_n\}$.
Therefore $(\sigma_\varphi^i(z),\sigma_\varphi^i(y_j))\in
\beta^{-1}\circ\beta\subseteq\alpha$ for all $ j\in\{1,\ldots,n\}$
and $ i\in\{l_j,l_j+1,\ldots,k_j\}$,
which completes the proof.
\end{proof}
\begin{theorem}\label{AF10}
In the dynamical system $(F,\h)$ the following statements are equivalent:
\begin{itemize}
\item[a.] the dynamical system $(F,\h)$ has  weak specification property,
\item[b.] the dynamical system $(F,\h)$ has almost weak specification property,
\item[c.] $\bigcap\{\h^n(F):n\geq1\}$ is singleton.
\end{itemize}
\end{theorem}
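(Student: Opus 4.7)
The plan is to establish (a) $\Rightarrow$ (b) $\Rightarrow$ (c) $\Rightarrow$ (a). The implication (a) $\Rightarrow$ (b) is immediate, as a constant map $N_\alpha$ satisfies $\lim_n N_\alpha(n)/n = 0$.

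For (c) $\Rightarrow$ (a), write $\bigcap_{n\geq 1}\h^n(F) = \{w\}$ and, given $\alpha\in\mathcal K$, pick a finite $H\subseteq F\setminus\{b\}$ with $\gamma_H\subseteq\alpha$. I would first produce an integer $N\geq 1$ with $\h^N(F)\times\h^N(F)\subseteq\gamma_H$; then weak specification holds with constant gap $N$, witnessed by $x:=y_1$: the first block falls in $\Delta_F\subseteq\gamma_H$, and for $j\geq 2$ the time index $i\geq l_j\geq k_1+N\geq N$ places both $\h^i(x)$ and $\h^i(y_j)$ in $\h^N(F)$, so the pair lies in $\gamma_H$. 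To produce $N$ I would split on whether $w\in H$. If $w\notin H$, then $F\setminus H$ is an open neighbourhood of $w$ (cofinite, containing $b$), and the standard compactness argument for decreasing compact sets yields $N$ with $\h^N(F)\subseteq F\setminus H$. If $w\in H$, then $w\neq b$, so $b\notin\bigcap_n\h^n(F)$, and some $\h^m(F)$ omits $b$; as a closed subset of a Fort space missing the particular point it must be finite, and the decreasing sequence of finite sets $\{\h^n(F)\}_{n\geq m}$ stabilizes at $\{w\}$.

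The main step is (b) $\Rightarrow$ (c), which I would prove by contradiction. Suppose $K:=\bigcap_{n\geq 1}\h^n(F)$ contains distinct points $p,q$; since $K\subseteq\h^N(F)$ for every $N\geq 1$, each of $p,q$ admits preimages of arbitrary depth. In the symmetric subcase $p,q\neq b$, set $H=\{p,q\}$, $\alpha=\gamma_H$, $N:=N_\alpha(0)$, and exploit that $\gamma_H[z]=\{z\}$ for $z\in H$. Two applications of almost weak specification with blocks $\{0\}$ and $\{N\}$ --- first with $(y_1,y_2)=(p,v)$ for some $v\in\h^{-N}(q)$, then with $(y_1,y_2')=(p,v')$ for some $v'\in\h^{-N}(p)$ --- force $\h^N(p)=q$ and $\h^N(p)=p$ respectively, contradicting $p\neq q$.

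The main obstacle is the remaining subcase $p=b$, $q\neq b$, where the restriction $H\subseteq F\setminus\{b\}$ allows only $H=\{q\}$ and the symmetric trick collapses. Here I would first apply almost weak specification with $(y_1,y_2)=(q,v)$ for $v\in\h^{-N}(q)$ and arbitrary $N\geq N_\alpha(0)$, forcing $\h^N(q)=q$; comparing the values at $N$ and $N+1$ then yields $\h(q)=q$, so $q$ is a fixed point. A second application with $(y_1,y_2)=(q,b)$ forces $(q,\h^N(b))\in\gamma_{\{q\}}$, hence $\h^N(b)=q$. I would then invoke Remark~\ref{fort10} to refute this: if $\h(b)=b$ then $\h^N(b)=b\neq q$; if $\h(b)\neq b$ then $\h(F)$ is finite, the hypothesis $b\in K$ makes $b$ periodic with period $T\geq 2$, and the fixed point $q$ cannot lie on $b$'s $T$-cycle (all points on a $T$-cycle share period $T\geq 2$), so again $\h^N(b)\neq q$. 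In either branch the two derivations conflict, completing the contradiction.
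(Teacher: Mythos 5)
Your proof is correct. The implications (a) $\Rightarrow$ (b) and (c) $\Rightarrow$ (a) run essentially parallel to the paper's: for (c) $\Rightarrow$ (a) the paper also takes $x:=y_1$ as the tracing point and distinguishes the cases $\bigcap_{n\geq1}\h^n(F)=\{b\}$ (where $\h^n(F)$ eventually misses the finite set $H$) and $\bigcap_{n\geq1}\h^n(F)=\{c\}$ with $c\neq b$ (where $\h(F)$ is finite and $\h^n(F)$ collapses to $\{c\}$); your packaging of both cases into a single integer $N$ with $\h^N(F)\times\h^N(F)\subseteq\gamma_H$ is a mild but pleasant unification. The genuine divergence is in (b) $\Rightarrow$ (c). The paper avoids your case split on whether $b$ is one of the two points: it fixes the non-particular point $z$ among the two distinct points of the intersection, works only with $\gamma_{\{z\}}$, forces $\h^{l_2}(z)=z$ and $\h(z)=z$ using preimages $w\in\h^{-(l_2+1)}(z)$, and then, for the \emph{other} point $y$ --- whether or not $y=b$ --- feeds a deep preimage $v\in\h^{-l_2}(y)$ into the second block, obtaining $(z,y)=(\h^{l_2}(z),\h^{l_2}(v))\in\gamma_{\{z\}}$ and hence $y=z$ at once. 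In your subcase $p=b$ you instead put $b$ itself (rather than an element of $\h^{-N}(b)$, which exists precisely because $b$ lies in the intersection) into the second block; this only yields $\h^N(b)=q$ and obliges you to invoke Remark~\ref{fort10} together with a periodic-orbit argument to reach a contradiction. That detour is sound, but it can be deleted: taking $y_2=v\in\h^{-N}(b)$ gives $(q,b)\in\gamma_{\{q\}}$ and the immediate contradiction $b=q$, with no appeal to the structure of continuous self-maps of Fort spaces. (A shared cosmetic point: both you and the paper should replace $N_\alpha(0)$ by $\max\{1,N_\alpha(0)\}$ so that the requirement $k_1<l_2$ in Definition~\ref{def10} is met.)
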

\begin{proof}
It's evident that (a) implies (b).
\\
(b) $\Rightarrow$ (c): Suppose $(F,\h)$ has almost weak specification property. Note that
$\bigcap\{\h^n(F):n\geq1\}\neq\varnothing$ since $F$ is a nonempty compact Hausdorff and
$\h:F\to F$ is continuous. If $\bigcap\{\h^n(F):n\geq1\}$ is not singleton, then there exist distinct points
$z,y\in\bigcap\{\h^n(F):n\geq1\}$. We may suppose $z\neq b$.
There exists $N_{\gamma_{\{z\}}}:
{\mathbb Z}_+\to{\mathbb Z}_+$ such that ${\displaystyle\lim_{n\to\infty}\frac{N_{\gamma_{\{z\}}}(n)}{n}
}=0$ and for all $y_1,\ldots,y_n\in F$ and $0\leq l_1\leq k_1<l_2\leq k_2<\ldots<l_n\leq k_n$ with 
$l_{i+1}-k_i\geq N_{\gamma_{\{z\}}}(k_{i+1}-l_{i+1})$ (for all $i=1,\ldots,n-1$) there exists $x\in Z$
such that 
\[(\h^i(x),\h^i(y_j))\in\gamma_{\{z\}}\:\forall j\in\{1,\ldots,n\}\forall i\in\{l_j,l_j+1,\ldots,k_j\}\:.\]
Let:
\[l_1=k_1=0,l_2=k_2\geq N_{\gamma_{\{z\}}}(0)\:,\]
Choose $w\in\h^{-(l_2+1)}(z)$, hence there exists $x_1,x_2\in F$ such that:
\begin{align}\label{eq1 lm1}
(x_1,z)=(\h^{l_1}(x_1),\h^{l_1}(z)),(\h^{l_2}(x_1),z)=(\h^{l_2}(x_1),\h^{l_2}(\h(w)))\in\gamma_{\{z\}}\:,
\end{align}
and:
\begin{align}\label{eq2 lm2}
(x_2,z)=(\h^{l_1}(x_2),\h^{l_1}(z)),(\h^{l_2}(x_2),\h^{l_2}(w))\in\gamma_{\{z\}}\:.
\end{align}
By \eqref{eq1 lm1}, $x_1=z=\h^{l_2}(x_1)$, thus $z=\h^{l_2}(z)$.
Considering \eqref{eq2 lm2}
we have $x_2=z$ and therefore 
$(z,\h^{l_2}(w))=(\h^{l_2}(z),\h^{l_2}(w))=(\h^{l_2}(x_2),\h^{l_2}(w))\in\gamma_{\{z\}}$,
$z=\h^{l_2}(w)$, which leads to $\h(z)=\h^{l_2+1}(w)=z$.
\\
Now for $y\in\bigcap\{\h^n(F):n\geq1\}\setminus\{z\}$ and choose $v\in\h^{-l_2}(y)$, 
there exists $x_3\in F$
such that:
\[(x_3,z)=(\h^{l_1}(x_3),\h^{l_1}(z)),(\h^{l_2}(x_3),y)=(\h^{l_2}(x_3),\h^{l_2}(v))\in\gamma_{\{z\}}\:,\]
thus $x_3=z$ and $(z,y)=(\h^{l_2}(z),y)=(\h^{l_2}(x_3),\h^{l_2}(v))\in\gamma_{\{z\}}$
which shows that $y=z$. Therefore $\bigcap\{\h^n(F):n\geq1\}=\{z\}$ which is in contradiction
with $y\neq z$. Thus $\bigcap\{\h^n(F):n\geq1\}$ is singleton.
\\
(c) $\Rightarrow$ (a): Suppose $\bigcap\{\h^n(F):n\geq1\}$ is singleton:
\\
{\it Case I:} $\bigcap\{\h^n(F):n\geq1\}=\{b\}$. Let $\alpha\in{\mathcal K}$. There exists finite
subset $A$ of $F\setminus\{b\}$ with $\gamma_A\subseteq\alpha$. For all
$y\in A$ there exists $m_y\geq1$ with $y\notin\h^{m_y}(F)$, thus $\h^{-m_y}(y)=\varnothing$.
Let $m=\mathop{\Sigma}\limits_{y\in A}m_y$, then $\h^{-m}(A)=\varnothing$.
So for all $n\geq m$ , $\h^n(F)\cap A=\varnothing$. Let $N_\alpha=m$, then
for all 
$y_1,\ldots,y_n\in F$ and $0\leq l_1\leq k_1<l_2\leq k_2<\ldots<l_n\leq k_n$ with 
$l_{i+1}-k_i\geq m$ (for all $i=1,\ldots,n-1$),
$m\leq l_2$. Thus for all $i\geq l_2(\geq m)$ and $j\geq2$,
$\h^i(y_1),\h^i(y_j)\notin A$, so $(\h^i(y_1),\h^i(y_j))\in\gamma_A\subseteq\alpha$.
On the other hand for $0\leq i\leq k_1$, $(\h^i(y_1),\h^i(y_1))\in\gamma_A\subseteq\alpha$.
Hence 
\[(\h^i(y_1),\h^i(y_j))\in\alpha\:\forall j\in\{1,\ldots,n\}\forall i\in\{l_j,l_j+1,\ldots,k_j\}\:,\]
which completes the proof.
\\
{\it Case II:} $\bigcap\{\h^n(F):n\geq1\}=\{c\}$ and $c\neq b$. In this case $\h(b)\neq b$
and $\h(F)$ is finite, so by $\bigcap\{\h^n(F):n\geq1\}=\{c\}$, there exists $m\geq1$ such that:
\[\forall n\geq m\:(\h^n(F)=\{c\})\:.\]
Now for all 
$y_1,\ldots,y_n\in F$ and $0\leq l_1\leq k_1<l_2\leq k_2<\ldots<l_n\leq k_n$ with 
$l_{i+1}-k_i\geq m$ (for all $i=1,\ldots,n-1$), $l_2\geq m$
and $\h^i(y_j)=\h^i(y_1)$ for all $i\in\{l_j,\ldots,k_j\}$ and $j\in\{1,\ldots,n\}$.
Thus $(F,\h)$ has  weak specification property.
\end{proof}
\begin{theorem}\label{AF20}
The dynamical system $(F,\h)$ has  specification property,
if and only if $F=\{b\}$.
\end{theorem}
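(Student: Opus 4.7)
The sufficiency is immediate: when $F=\{b\}$ the map $\h$ is the identity, every point is a fixed (hence periodic) point, the uniform structure $\mathcal K$ is trivial, and so specification holds vacuously. The real content is the forward direction, and I plan to attack it by combining the ``singleton intersection'' conclusion of Theorem~\ref{AF10} with density of periodic points, and then exploiting the very rigid topology of a Fort space.

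Assume $(F,\h)$ has the specification property. First I would note that specification implies weak, hence almost weak, specification, so Theorem~\ref{AF10} gives a unique $c\in F$ with $\bigcap_{n\geq 1}\h^n(F)=\{c\}$. Next I would argue that the periodic points of $\h$ are dense in $F$: given any $y\in F$ and $\alpha\in\mathcal K$, apply the specification property to $n=2$, $y_1=y_2=y$, $l_1=k_1=0$ and $l_2=k_2=N$ (where $N$ is the constant associated with $\alpha$); the resulting periodic point $x$ satisfies $(x,y)\in\alpha$, i.e.\ $x\in\alpha[y]$. This is exactly the density argument used in Corollary~\ref{salam70}, transcribed to the uniform setting.

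The decisive observation is that every periodic point of $\h$ must lie in $\bigcap_{n\geq 1}\h^n(F)$: if $\h^p(x)=x$, then for every $n\geq 1$ choose $k$ with $kp\geq n$ to obtain $x=\h^{kp}(x)\in\h^{kp}(F)\subseteq\h^n(F)$. Hence the set of periodic points is contained in $\{c\}$, and by density $\overline{\{c\}}=F$. Now I would invoke the topology of the Fort space: if $c\neq b$ then $\{c\}$ is open and closed (an isolated point), while if $c=b$ then $\{b\}$ is closed because $F\setminus\{b\}$ is open. In either case $\{c\}$ is closed, so $F=\{c\}$, and since by definition $b\in F$ this forces $c=b$ and $F=\{b\}$.

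The argument has no real obstacle; the only place where care is needed is verifying that the ``periodic points are dense'' argument of Corollary~\ref{salam70} goes through verbatim for $(F,\h)$ (which requires nothing beyond the definition of specification), and noting the slightly asymmetric closedness of singletons in a Fort space (trivial for isolated points, and holding for $b$ because $\{b\}^c$ is an open subset of $F$).
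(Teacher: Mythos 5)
Your proof is correct and follows essentially the same route as the paper: both directions deduce from Theorem~\ref{AF10} that $\bigcap_{n\geq1}\h^n(F)$ is a singleton, use the specification property (with $l_1=k_1=0$) to produce periodic points arbitrarily close to arbitrary points, and combine this with the observation that $Per(\h)\subseteq\bigcap_{n\geq1}\h^n(F)$. The only difference is the finish: the paper first obtains $F\setminus\{b\}\subseteq Per(\h)$, concludes $F\setminus\{b\}$ has at most one element, and disposes of the residual two-point case $F=\{x,b\}$ by an explicit case analysis, whereas you close the argument slightly more cleanly by noting that the dense set $Per(\h)$ is contained in the closed singleton $\{c\}$ (your density argument at the point $b$ is exactly what the paper's Case~I does in disguise, and the symmetry of the basic entourages $\gamma_A$ justifies your passage from $(x,y)\in\alpha$ to $x\in\alpha[y]$).
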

\begin{proof}
It's clear that for $F=\{b\}$, $(F,\h)$ has  specification property.
\\
Conversely, suppose $(F,\h)$ has  specification property, then it has weak specification property.
By Theorem~\ref{AF10}, $\bigcap\{\h^n(F):n\geq1\}$ is singleton. For
$z\in F\setminus\{b\}$ there exists a periodic point $x\in F$ such that for
$l_1=0$, $(x,z)=(\h^{l_1}(x),\h^{l_1}(z))\in\gamma_{\{z\}}$ so $z=x$
and $z$ is a periodic point of $\h$. Thus 
\[F\setminus\{b\}\subseteq Per(\h)\subseteq 
\bigcap\{\h^n(F):n\geq1\}\: ,\]
hence $F\setminus\{b\}$ has at most one element.
Suppose that $F=\{x,b\}$. We may consider the following cases, for $x\neq b$:
\\
{\it Case I:} $\bigcap\{\h^n(F):n\geq1\}=\{x\}\neq \{b\}$. In this case $\h(x)=\h(b)=x$. So
$Per(f)=\{x\}$, thus for $l_1=0$ , $(x,b)=(\h^{l_1}(x),\h^{l_1}(b))\in\gamma_{\{x\}}$
which leads to the contradiction $x=b$, hence this case does not occur.
\\
{\it Case II:} $\bigcap\{\h^n(F):n\geq1\}=\{b\}\neq \{x\}$. In this case we have the contradiction
$x\in F\setminus\{b\}\subseteq Per(\h)\subseteq 
\bigcap\{\h^n(F):n\geq1\}=\{b\}$
hence this case does not occur too.
\\
Considering the above cases, we get $x=b$ and $F=\{b\}$.
\end{proof}
\section{Stroboscopical property}
\noindent In this section we show that $(X^\Gamma,\sigma_\varphi)$ has uniform stroboscopical property,
 (stroboscopical property) if and only if $\varphi:\Gamma\to\Gamma$ is one--to--one. Also
 $(F,\h)$ has uniform stroboscopical property
(stroboscopical property) if and only if all points of $F$ are periodic points of $\h$.
\begin{lemma}\label{salam80}
If $(X^\Gamma,\sigma_\varphi)$ has stroboscopical property, then
$\varphi:\Gamma\to\Gamma$ is one--to--one.
\end{lemma}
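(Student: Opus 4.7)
The plan is a direct contrapositive argument, using that $\sigma_\varphi$ collapses coordinates that are already identified by $\varphi$.

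Assume, toward a contradiction, that $\varphi:\Gamma\to\Gamma$ is not one-to-one. Then I can pick distinct elements $\alpha,\beta\in\Gamma$ with $\varphi(\alpha)=\varphi(\beta)$. The key algebraic observation is that for every $n\geq1$, $\varphi^n(\alpha)=\varphi^{n-1}(\varphi(\alpha))=\varphi^{n-1}(\varphi(\beta))=\varphi^n(\beta)$. Consequently, by the defining formula of $\sigma_\varphi$, for every $x=(x_\gamma)_{\gamma\in\Gamma}\in X^\Gamma$ and every $n\geq1$ one has $(\sigma_\varphi^n(x))_\alpha=x_{\varphi^n(\alpha)}=x_{\varphi^n(\beta)}=(\sigma_\varphi^n(x))_\beta$.

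Next, since $|X|\geq 2$, I pick distinct $p,q\in X$ and choose any point $z=(z_\gamma)_{\gamma\in\Gamma}\in X^\Gamma$ with $z_\alpha=p$ and $z_\beta=q$. I then take any strictly increasing sequence $A=\{n_i\}_{i\geq0}$ of positive integers (for instance $n_i=i+1$). I claim that $\{x\in X^\Gamma:z\in\omega_{\sigma_\varphi}(A,x)\}=\varnothing$, which contradicts the stroboscopical property. Indeed, suppose some $x\in X^\Gamma$ had a subsequence $\{\sigma_\varphi^{n_{i_k}}(x)\}_{k\geq0}$ converging to $z$ in the product topology on $X^\Gamma$. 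Since the coordinate projections are continuous and $X$ carries the discrete topology, the $\alpha$-coordinates must eventually equal $z_\alpha=p$ and the $\beta$-coordinates must eventually equal $z_\beta=q$. But by the observation above, $(\sigma_\varphi^{n_{i_k}}(x))_\alpha=(\sigma_\varphi^{n_{i_k}}(x))_\beta$ for all $k$, forcing $p=q$, a contradiction.

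Since this contradicts the stroboscopical property assumed for $(X^\Gamma,\sigma_\varphi)$, I conclude $\varphi$ must be one-to-one. There is no real obstacle here: the entire argument rests on the two elementary facts that $\varphi(\alpha)=\varphi(\beta)$ propagates to all higher iterates, and that in the product of discrete spaces, convergence is eventual equality in each coordinate. The only point requiring minor care is ensuring that the sequence $A$ starts at an index $n_0\geq1$, so that the identification of $\alpha$- and $\beta$-coordinates under $\sigma_\varphi^{n_i}$ is actually in force.
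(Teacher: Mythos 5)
Your proof is correct and follows essentially the same route as the paper's: pass to the contrapositive, take distinct $\alpha,\beta$ with $\varphi(\alpha)=\varphi(\beta)$, observe that every iterate $\sigma_\varphi^{n}(x)$ ($n\geq1$) has equal $\alpha$- and $\beta$-coordinates, and choose a target $z$ whose $\alpha$- and $\beta$-coordinates differ so that no orbit subsequence can converge to it. Your version merely spells out the convergence-in-product-of-discrete-spaces step a bit more explicitly than the paper does.
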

\begin{proof}
Suppose $\varphi:\Gamma\to\Gamma$ is not one--to--one, then there are distinct points 
$\beta,\lambda\in \Gamma$ such that $\varphi(\beta)=\varphi(\lambda)$. 
Choose distinct $p,q\in X$.
Consider $z=(z_\alpha)_{\alpha\in\Gamma}\in X^\Gamma$ with:
\[z_\alpha=\left\{\begin{array}{lc} p & \alpha=\beta\:, \\ q & {\rm otherwise}\:. \end{array}\right.\]
then for all $(x_\alpha)_{\alpha\in\Gamma}\in X^\Gamma$ and $n\geq1$
, $x_{\varphi^n(\beta)}=x_{\varphi^n(\lambda)}$. Thus there is not any sequence
$\{n_i\}_{i\geq0}$ in $\mathbb N$ with ${\displaystyle\lim_{i\to\infty}
\sigma_\varphi^{n_i}((x_\alpha)_{\alpha\in\Gamma})}=z$. Therefore 
$(X^\Gamma,\sigma_\varphi)$ does not have stroboscopical property.
\end{proof}
\begin{lemma}\label{salam110}
If $A=\{m^1_i\}_{i\geq0}$ is a strictly increasing sequence in $\mathbb N$, then:
\begin{itemize}
\item[1.] there exists a map $f:{\mathbb N}\to{\mathbb N}$, and $A$ has a subsequence
 	 $\{n_i\}_{i\geq0}$ 
 	such that for all $m\geq1$ and $i\geq m$, $n_i\equiv f(m) \: ({\rm mod}\: m)$
 	and $f(m)\leq m$;
 \item[2.] $A$ has a subsequence
$\{m_i\}_{i\geq0}$ such that $m_{i+1}-m_i>2i$ for all $i\geq0$.
 \end{itemize}
\end{lemma}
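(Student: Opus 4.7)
Both parts rest on standard extraction arguments, so the main work is in organising a clean diagonal construction for (1).

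For (1), the plan is to build a nested sequence $A = A^{(0)} \supseteq A^{(1)} \supseteq A^{(2)} \supseteq \cdots$ of infinite subsequences of $A$, where each $A^{(m)}$ is constant modulo $m$. Start with $A^{(0)} = A$. Inductively, suppose $A^{(m-1)}$ has been constructed and is infinite; since $\mathbb{Z}/m\mathbb{Z}$ is finite, the pigeonhole principle gives a residue class mod $m$ containing infinitely many terms of $A^{(m-1)}$. Choose a representative of this class in $\{1, 2, \ldots, m\}$, call it $f(m)$, and let $A^{(m)}$ be the infinite subsequence of $A^{(m-1)}$ consisting of those terms $\equiv f(m) \pmod m$. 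This defines $f:\mathbb{N}\to\mathbb{N}$ with $f(m) \leq m$ and gives the nested chain. Now perform a diagonal extraction: choose $n_0 < n_1 < n_2 < \cdots$ with $n_i \in A^{(i)}$, which is possible since each $A^{(i)}$ is infinite. For every $m \geq 1$ and every $i \geq m$ one has $n_i \in A^{(i)} \subseteq A^{(m)}$, hence $n_i \equiv f(m) \pmod m$, as required.

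For (2), the plan is a direct greedy selection. Set $m_0 := m^1_0$. Having chosen $m_0 < m_1 < \cdots < m_i$ inside $A$, use that $A$ is strictly increasing and therefore unbounded in $\mathbb{N}$ to pick $m_{i+1}$ to be any element of $A$ strictly greater than $m_i + 2i$. Then $m_{i+1} - m_i > 2i$ for all $i \geq 0$, and $\{m_i\}_{i\geq 0}$ is by construction a subsequence of $A$.

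The only conceptual obstacle is making sure the residue conditions in (1) remain simultaneously consistent across all $m$; this is exactly what the diagonal extraction secures, since the chain $A^{(0)} \supseteq A^{(1)} \supseteq \cdots$ forces any tail of $\{n_i\}$ to lie in every $A^{(m)}$ eventually. Part (2) requires no serious idea beyond unboundedness of $A$.
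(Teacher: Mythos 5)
Your proof is correct and follows essentially the same route as the paper: a pigeonhole extraction of nested subsequences, each constant modulo $m$, followed by a diagonal selection for part (1), and a greedy choice exploiting unboundedness of $A$ for part (2). No substantive differences.
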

\begin{proof}
{\bf 1.}
Suppose $A=\{m^1_i\}_{i\geq0}=:A_1$ and $k_1=1$, then there exists $k_2\in\{1,2\}$
and $A$ has a subsequence  
$A_2=\{m^2_i\}_{i\geq0}$ such that for all $i\geq0$, $m^2_i\equiv k_2 ({\rm mod}\:2)$.
For $s\geq2$, suppose that we have chosen subsequences $A_1=\{m^1_i\}_{i\geq0},\ldots,
A_s=\{m^s_i\}_{i\geq0}$ and natural numbers $k_1,\ldots,k_s$. Then there exists 
$k_{s+1}\in\{1,\ldots,s+1\}$,
and $A_s$ has a subsequence
$A_{s+1}=\{m^{s+1}_i\}_{i\geq0}$ such that for all $i\geq0$, 
$m^{s+1}_i\equiv k_{s+1} ({\rm mod}\:s+1)$. Let $f:\mathbb{N}\to\mathbb{N}$
with $f(i)=k_i$ ($i\geq1$) and subsequence  $\{n_i\}_{i\geq0}$ of $A$ with
$n_0=m_0^1$ and $n_i=m_i^i$ for $i\geq1$, then the map $f:\mathbb{N}\to\mathbb{N}$
and subsequence $\{n_i\}_{i\geq0}$ of $A$ satisfies (1).
\\
{\bf 2.} We construct $\{m_i\}_{i\geq0}$ inductively. Let $m_0:=m^1_0$, for
$i\geq0$ suppose we have chosen $m_i=m^1_{n_i}$ let $m_{i+1}:=m^1_{n_i+2i+1}$.
The the subsequence $\{m_i\}_{i\geq0}$ of $A$ satisfies (2).
\end{proof}
\begin{lemma}\label{salam90}
If all points of $\Gamma$ are periodic points of $\varphi:\Gamma\to\Gamma$,
then $(X^\Gamma,\sigma_\varphi)$ has uniform stroboscopical property.
\end{lemma}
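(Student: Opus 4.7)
The plan is to apply Lemma~\ref{salam110}(1) to pass to a well-behaved subsequence of $A$ and then construct a single map $\varrho$ that simultaneously inverts all the iterates along that subsequence, exploiting the hypothesis that every point of $\Gamma$ has finite $\varphi$-orbit.

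First I would use Lemma~\ref{salam110}(1) to extract a subsequence $B=\{n_i\}_{i\geq 0}$ of $A$ together with a map $f:\mathbb{N}\to\mathbb{N}$ satisfying $f(m)\leq m$ and $n_i\equiv f(m)\pmod{m}$ whenever $i\geq m\geq 1$. For each $\theta\in\Gamma$, let $p_\theta\geq 1$ denote its minimal $\varphi$-period (finite by hypothesis); note that $p_{\varphi^k(\theta)}=p_\theta$ for every $k\geq 0$ and that $\varphi$ restricts to a permutation of each (finite) orbit. I would then define $\psi:\Gamma\to\Gamma$ by
\[\psi(\theta):=\varphi^{p_\theta-f(p_\theta)}(\theta),\]
which makes sense because $0\leq p_\theta-f(p_\theta)$, and set $\varrho:=\sigma_\psi:X^\Gamma\to X^\Gamma$. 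Intuitively $\psi$ is an orbit-wise inverse of $\varphi^{f(p_\theta)}$, so composing with $\sigma_\varphi^{n_i}$ will cancel the shift as soon as $n_i$ enters the stable residue class modulo $p_\theta$.

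The verification of uniform convergence $\sigma_\varphi^{n_i}\circ\varrho\to\mathrm{id}_{X^\Gamma}$ is then essentially an unwinding. Given a basic entourage $\alpha_H\in\mathcal{U}$ with $H\subseteq\Gamma$ finite, set $P:=\max\{p_\theta:\theta\in H\}$. For $i\geq P$, $\theta\in H$ and $x=(x_\alpha)_{\alpha\in\Gamma}\in X^\Gamma$, a direct computation using $p_{\varphi^{n_i}(\theta)}=p_\theta$ yields
\[\bigl(\sigma_\varphi^{n_i}(\varrho(x))\bigr)_\theta=x_{\psi(\varphi^{n_i}(\theta))}=x_{\varphi^{n_i+p_\theta-f(p_\theta)}(\theta)}.\]
Since $i\geq p_\theta$, Lemma~\ref{salam110}(1) gives $n_i\equiv f(p_\theta)\pmod{p_\theta}$, so the exponent is a multiple of $p_\theta$ and the right-hand side equals $x_\theta$. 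Hence $(\sigma_\varphi^{n_i}(\varrho(x)),x)\in\alpha_H$ for every $x$ and every $i\geq P$, which is the required uniform convergence.

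The main obstacle, as I see it, is producing a single map $\varrho$ that works for all indices simultaneously rather than one $\varrho$ per $i$. The trick is that Lemma~\ref{salam110}(1) eventually freezes the residue of $n_i$ modulo each fixed period $p_\theta$, so a coordinate-wise correction by $\varphi^{p_\theta-f(p_\theta)}$ (the orbit-wise inverse of $\varphi^{f(p_\theta)}$) is the right uniform choice; the uniformity over $H$ then comes for free because $H$ is finite and so the threshold $P$ is finite.
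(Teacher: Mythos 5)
Your proposal is correct and follows essentially the same route as the paper: extract the residue-stabilizing subsequence via Lemma~\ref{salam110}(1), correct each coordinate by $\varphi^{p_\theta-f(p_\theta)}$ (the paper writes this map $z\mapsto z^*$ pointwise rather than naming it as the generalized shift $\sigma_\psi$, but it is the identical map $\varrho$), and use finiteness of $H$ to get a uniform threshold. Your threshold $i\geq p_\theta$ is in fact the correct one demanded by Lemma~\ref{salam110}(1), slightly cleaner than the $i\geq f(k_\theta)$ written in the paper.
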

 \begin{proof}
 Let $A$ be a strictly increasing sequence in $\mathbb N$. 
By Lemma~\ref{salam110} there exists a map $f:{\mathbb N}\to{\mathbb N}$
and $A$ has a subsequence
 $\{n_i\}_{i\geq0}$
 such that for all $m\geq1$ and $i\geq m$, $n_i\equiv f(m) \: ({\rm mod}\: m)$
 and $f(m)\leq m$. For all 
 $\alpha\in\Gamma$ let $k_\alpha=\min\{n\geq1:\varphi^n(\alpha)=\alpha\}$.
 For $z=(z_\alpha)_{\alpha\in\Gamma}\in X^\Gamma$ define  $z^*=(z^*_\alpha)_{\alpha\in\Gamma}\in X^\Gamma$
 with:
 \[z^*_\alpha=z_{\varphi^{k_\alpha-f(k_\alpha)}(\alpha)}\:\:\:(\alpha\in\Gamma)\:.\] 
 Then for all
 $\alpha\in\Gamma$ and $i\geq f(k_\alpha)$,  $k_\alpha-f(k_\alpha)+n_i\equiv0\:
 ({\rm mod}\:k_\alpha)$, so 
 \[\forall\alpha\in\Gamma\:\forall i\geq f(k_\alpha)\:\:(\varphi^{k_\alpha-f(k_\alpha)+n_i}(\alpha)=\alpha\:\:.)\]
 For $D\in\mathcal{U}$ there exists finite subset $H=\{\beta_1,\ldots,\beta_s\}$ of $\Gamma$ with $\alpha_H\subseteq D$.
 For all $i\geq\max\{f(k_{\beta_1}),\ldots,f(k_{\beta_m})\}$ and for all $z=(z_\alpha)_{\alpha\in\Gamma}\in X^\Gamma$
 we have $(z,\sigma_\varphi^{n_i}(z^*))\in\alpha_H\subseteq D$.
 Thus for $\mathop{\varrho:X^\Gamma\to X^\Gamma}\limits_{\SP \SP z\mapsto z^*}$ the sequence
 $\{\sigma_\varphi^{n_i}\circ\varrho\}_{i\geq1}$ converges uniformly to $id_{X^\Gamma}$.
 \end{proof}
\begin{remark}\label{salam120}
If $\varphi:\Gamma\to\Gamma$ is one--to--one, then each $D\in\frac{\Gamma}{\thicksim_\varphi}
(=\{\frac{\alpha}{\thicksim_\varphi}:\alpha\in\Gamma\})$ satisfies exactly one of the 
following conditions:
\begin{itemize}
\item $D$ has finite elements like $m$ and for all $\alpha\in D$, $D=\{\varphi^i(\alpha):
	1\leq i\leq m\}$,
\item $D$ is infinite and there exists $\alpha\in\Gamma$ with $D=\{\varphi^n(\alpha):n\geq0\}$,
\item $D$ is infinite and we may suppose that $D=\{\beta_n:n\in\mathbb{Z}\}$ with $\varphi(\beta_n)=
	\beta_{n+1}$ for all $n\in\mathbb{Z}$.
\end{itemize}
\end{remark}
\begin{lemma}\label{salam100}
If $\varphi:\Gamma\to\Gamma$ is one--to--one and does not have any periodic point,
then $(X^\Gamma,\sigma_\varphi)$ has uniform stroboscopical property.
\end{lemma}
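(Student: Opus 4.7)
The plan is to mimic Lemma~\ref{salam90}, but instead of using periodicity to invert $\sigma_\varphi^{m_j}$ on each orbit, I will use the gap-growth from Lemma~\ref{salam110}(2) to build a single map $\varrho$ that approximately inverts all $\sigma_\varphi^{m_j}$ simultaneously. First I apply Lemma~\ref{salam110}(2) to extract a subsequence $\{m_i\}_{i\geq 0}$ of $A$ with $m_{i+1}-m_i>2i$. Then I decompose $\Gamma$ into $\thicksim_\varphi$-classes and invoke Remark~\ref{salam120}: since $\varphi$ is one-to-one without periodic points, every class $D$ is either a forward orbit $\{\varphi^n(\alpha_D):n\geq 0\}$ with $\alpha_D\notin\varphi(\Gamma)$, or a two-sided orbit $\{\beta^D_n:n\in\mathbb{Z}\}$ with $\varphi(\beta^D_n)=\beta^D_{n+1}$. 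I fix such a parametrization of each orbit once and for all.

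Next I define $\varrho\colon X^\Gamma\to X^\Gamma$ block by block. Fix any $p\in X$. The key observation is that $m_{i+1}-m_i>2i$ forces the windows $W_j:=[m_j-j,m_j+j]$ to be pairwise disjoint subsets of $\mathbb{Z}$. For $\gamma\in\Gamma$ indexed by $k$ in its orbit $D$, I set $\varrho(z)_\gamma$ equal to the value of $z$ at the element of $D$ indexed by $k-m_j$ whenever $k$ lies in some $W_j$ for which $k-m_j$ is an allowable index; in the two-sided case every integer is allowable, and in the forward case I additionally require $k\geq m_j$, so that $k-m_j\geq 0$. Otherwise I set $\varrho(z)_\gamma:=p$. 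Disjointness of the $W_j$'s makes $\varrho$ well-defined.

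To verify that $\{\sigma_\varphi^{m_i}\circ\varrho\}_{i\geq 0}$ converges uniformly to $\mathrm{id}_{X^\Gamma}$, I take an arbitrary basic entourage $\alpha_H\in\mathcal U$ with $H\subseteq\Gamma$ finite, and let $K$ bound the absolute values of the indices of elements of $H$ in their orbits. For $j>K$ and any $\alpha\in H$ with orbit-index $k_\alpha$, the translate $k_\alpha+m_j$ lies in $W_j$ (respectively in $[m_j,m_j+j]$ in the forward case), so the defining rule yields $\varrho(z)_{\varphi^{m_j}(\alpha)}=z_\alpha$, i.e.\ $\sigma_\varphi^{m_j}(\varrho(z))_\alpha=z_\alpha$. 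Hence $(z,\sigma_\varphi^{m_j}(\varrho(z)))\in\alpha_H$ uniformly in $z$, completing the verification.

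The main obstacle is the two-sided orbit case. A naive rule ``shift by $-m_j$ on $[m_j,m_{j+1})$'' handles $\alpha\in H$ with nonnegative orbit-index but breaks for negative indices: then $k_\alpha+m_j$ slips back into the previous block and picks up the wrong shift $-m_{j-1}$. Centring the windows symmetrically around $m_j$ repairs this, and the condition $m_{i+1}-m_i>2i$ is exactly what guarantees that these symmetric windows remain disjoint yet wide enough to absorb every bounded translation by $k_\alpha$.
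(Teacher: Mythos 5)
Your argument is essentially the paper's own proof: the paper likewise splits $\Gamma$ into $\thicksim_\varphi$-classes (forward orbits $\Lambda_1$ and two-sided orbits $\Lambda_2$, exactly as in Remark~\ref{salam120}), extracts a subsequence with $n_{i+1}-n_i>2i$ via Lemma~\ref{salam110}(2), and defines $z^*$ by shifting $z$ back by $n_i$ on a window around each $n_i$ (the block $0\le j<n_{i+1}-n_i$ in the forward case and $-i<j<i$ in the two-sided case), with a filler value $p$ elsewhere. The one slip is your disjointness claim for the closed windows $W_j=[m_j-j,\,m_j+j]$: the hypothesis $m_{j+1}-m_j>2j$ only gives $m_{j+1}-(j+1)\ge m_j+j$, so when $m_{j+1}-m_j=2j+1$ consecutive windows share the point $m_j+j$, where on a two-sided orbit the two rules assign the values of $z$ at the distinct orbit-indices $j$ and $-(j+1)$; since your convergence check only uses indices with $|k_\alpha|<j$, this is repaired at no cost by taking the open windows $(m_j-j,\,m_j+j)$, which is exactly the paper's choice $-i<j<i$.
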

  \begin{proof}
  Suppose $\varphi:\Gamma\to\Gamma$ is a one--to--one map without any periodic point.
  Using axiom of choice there exists a subset of $\Gamma$ like $\Lambda$ such that
  $\bigcup\{\frac{\alpha}{\thicksim_\varphi}:\alpha\in\Lambda\}=\Gamma$ and for all
  distinct $\alpha,\beta\in\Lambda$, $\frac{\alpha}{\thicksim_\varphi}\cap 
  \frac{\beta}{\thicksim_\varphi}=\varnothing$, also we may suppose each $\alpha\in\Lambda$
  provides exactly one of the following conditions (use Remark~\ref{salam120} too):
  \begin{itemize}
  \item $\frac{\alpha}{\thicksim_\varphi}=\{\varphi^n(\alpha):n\geq0\}$,
  \item $\forall n\geq0 \exists\beta\in\Gamma\:(\varphi^n(\beta))=\alpha)$.
  \end{itemize}
  Let 
  \[\Lambda_1:=\{\alpha\in\Lambda:\frac{\alpha}{\thicksim_\varphi}=\{\varphi^n(\alpha):n\geq0\}\}\]
  and 
  \[\Lambda_2:=\{\alpha\in\Lambda:\forall n\geq0\:(\varphi^{-n}(\alpha)\neq\varnothing)\}\:.\]
  Moreover for all $\alpha\in\Gamma$, choose 
  $\lambda_\alpha\in\Lambda\cap\frac{\alpha}{\thicksim_\varphi}$. 
  Suppose  $A$ is a strictly increasing sequence in $\mathbb N$. 
By Lemma~\ref{salam110}, $A$ has a subsequence
 like $\{n_i\}_{i\geq0}$ such that 
 \linebreak
 $n_{i+1}-n_i>2i$ for all $i\geq0$. Consider  $p\in X$ and  for
 $z=(z_\alpha)_{\alpha\in\Gamma}\in X^\Gamma$ define $z^*=(z^*_\alpha)_{\alpha\in\Gamma}\in X^\Gamma$ with:
 \[z^*_\alpha=\left\{\begin{array}{lc}
 & \\
 z_{\varphi^j(\lambda_\alpha)} & 0\leq j<n_{i+1}-n_i , \alpha=\varphi^{n_i+j}(\lambda_\alpha) , \lambda_\alpha\in\Lambda_1 \:, \\
 & \\
 z_{\varphi^j(\lambda_\alpha)} & -i< j< i , \alpha=\varphi^{n_i+j}(\lambda_\alpha) , \lambda_\alpha\in\Lambda_2\:, \\
 & \\
 p & {\rm otherwise}\:. \\
 & 
 \end{array}\right.\]
 Note that using $0<n_1<n_2<n_3<\cdots$, for $k\geq n_1$ there exists unique $i\geq1$ and $j\in\{0,\ldots,n_{i+1}-n_i-1\}$
 with $k=n_i+j$ thus for $\mu\in\Lambda_1$  and
 $\alpha\in\{\varphi^n(\mu):n\geq n_1\}$, $z^*_\alpha$ is well-defined.
  Moreover for $\mu\in\Lambda_2$ if there exist
  $i_1,i_2\geq1$, $j_1\in\{-(i_1-1),\ldots,i_1-1\}$ and $j_2\in\{-(i_2-1),\ldots,i_2-1\}$ with
  $\varphi^{n_{i_1}+j_1}(\mu)=\varphi^{n_{i_2}+j_2}(\mu)$ we have $n_{i_1}+j_1=n_{i_2}+j_2$.
  If $i_1\neq i_2$ we may suppose $i_1<i_2$ thus $2(i_2-1)<n_{i_2}-n_{i_2-1}\leq n_{i_2}-n_{i_1}=j_1-j_2\leq i_1-1+i_2-1$
  which leads to contradiction $i_2< i_1$, thus $i_2=i_1$ and $j_2=j_1$. Therefore $z^*_\alpha$s are
  well-defined. We have:
  \[\forall\mu\in\Lambda_1\:\forall j\geq0\:\forall i\geq j+2\:(z^*_{\varphi^{n_i+j}(\mu)}=z_{\varphi^j(\mu)})\:,\tag{i}\]
  since for $\mu\in\Lambda_1$, $j\geq0$ and $i\geq j+2$ the relation $\lambda_{\varphi^{n_i+j}(\mu)}=\mu$
  and inequalities $0\leq j<2(j+2)\leq 2i< n_{i+1}-n_i$ lead to
  $z^*_{\varphi^{n_i+j}(\mu)}=z_{\varphi^j(\mu)}$. 
  \\
  Also
\[\forall\mu\in\Lambda_2\:\forall j\in\mathbb{Z}\:\forall i>|j|\:(z^*_{\varphi^{n_i+j}(\mu)}=z_{\varphi^j(\mu)})\:.\tag{ii}\] 
For $D\in\mathcal{U}$ there exists finite subset $H=\{\beta_1,\ldots,\beta_s\}$ of $\Gamma$ with $\alpha_H\subseteq D$.
There exists $N\geq1$, finite subset $H_1$ of $\Lambda_1$ and finite subset $H_2$ of $\Lambda_2$ with
$H\subseteq\bigcup\{\varphi^t(H_1\cup H_2):|t|\leq N\} (=(\bigcup\{\varphi^t(H_1):0\leq t\leq N\})\cup(\bigcup\{\varphi^t(H_2):-t\leq t\leq N\}))$. 
Using (i) and (ii) for all $z=(z_\alpha)_{\alpha\in\Gamma}\in X^\Gamma$ we have:
\[\forall\theta\in H\: \forall i\geq N+2\:(z^*_{\varphi^{n_i}(\theta)}=z_\theta)\:, \]
 thus $(z,\sigma_\varphi^{n_i}(z^*))\in\alpha_H\subseteq D$ for all $z=(z_\alpha)_{\alpha\in\Gamma}\in X^\Gamma$
 and $i\geq N+2$.
Hence for $\mathop{\varrho:X^\Gamma\to X^\Gamma}\limits_{\SP \SP z\mapsto z^*}$ the sequence
 $\{\sigma_\varphi^{n_i}\circ\varrho\}_{i\geq1}$ converges uniformly to $id_{X^\Gamma}$.
  \end{proof}
\begin{theorem}\label{salam200}
The dynamical system 
 $(X^\Gamma,\sigma_\varphi)$ has uniform stroboscopical property if and only if 
 $\varphi:\Gamma\to\Gamma$ is one-to-one.
\end{theorem}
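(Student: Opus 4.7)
The plan is to prove both directions separately. For the forward implication, I would first observe that uniform stroboscopical property implies plain stroboscopical property in any dynamical system on a uniform space: given $z\in X^\Gamma$ and a strictly increasing sequence $A=\{n_i\}$, the uniform stroboscopical property gives a subsequence $B=\{k_i\}$ of $A$ and a map $\varrho$ with $\sigma_\varphi^{k_i}\circ\varrho\to id$ uniformly. Setting $x:=\varrho(z)$ yields $\sigma_\varphi^{k_i}(x)\to z$, so $z\in\omega_{\sigma_\varphi}(A,x)$. Thus $(X^\Gamma,\sigma_\varphi)$ has the stroboscopical property, and Lemma~\ref{salam80} forces $\varphi$ to be one-to-one.

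For the converse, assume $\varphi:\Gamma\to\Gamma$ is one-to-one and decompose $\Gamma=\Gamma_1\sqcup\Gamma_2$, where $\Gamma_1$ consists of the periodic points of $\varphi$ and $\Gamma_2=\Gamma\setminus\Gamma_1$. The subset $\Gamma_1$ is trivially $\varphi$-invariant, and $\Gamma_2$ is also $\varphi$-invariant: if $\alpha\in\Gamma_2$ but $\varphi(\alpha)$ had period $q$, then $\varphi^{q+1}(\alpha)=\varphi(\alpha)$, so by injectivity of $\varphi$ we would get $\varphi^q(\alpha)=\alpha$, contradicting $\alpha\notin\Gamma_1$. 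Consequently the identification $X^\Gamma\simeq X^{\Gamma_1}\times X^{\Gamma_2}$ intertwines $\sigma_\varphi$ with $\sigma_{\varphi|_{\Gamma_1}}\times\sigma_{\varphi|_{\Gamma_2}}$, and the product uniform structure on the right is the one induced from $\mathcal U$.

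Now given any strictly increasing sequence $A$ in $\mathbb{N}$, I would apply Lemma~\ref{salam90} to the pair $(\Gamma_1,\varphi|_{\Gamma_1})$ (all points of which are periodic) to extract a subsequence $A^{(1)}$ of $A$ and a map $\varrho_1:X^{\Gamma_1}\to X^{\Gamma_1}$ along which $\sigma_{\varphi|_{\Gamma_1}}^{n_i}\circ\varrho_1\to id_{X^{\Gamma_1}}$ uniformly. Then apply Lemma~\ref{salam100} to $(\Gamma_2,\varphi|_{\Gamma_2})$ (which is one-to-one and has no periodic points) with input sequence $A^{(1)}$, obtaining a further subsequence $A^{(2)}$ of $A^{(1)}$ and a map $\varrho_2:X^{\Gamma_2}\to X^{\Gamma_2}$ giving uniform convergence $\sigma_{\varphi|_{\Gamma_2}}^{n_i}\circ\varrho_2\to id_{X^{\Gamma_2}}$ along $A^{(2)}$. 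Since $A^{(2)}$ is a subsequence of $A^{(1)}$, the first convergence persists along $A^{(2)}$, and setting $\varrho(z_1,z_2):=(\varrho_1(z_1),\varrho_2(z_2))$ yields uniform convergence of $\sigma_\varphi^{n_i}\circ\varrho$ to $id_{X^\Gamma}$ along $A^{(2)}$, since a basic entourage in $\mathcal U$ is determined by a finite subset of $\Gamma=\Gamma_1\cup\Gamma_2$.

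The main obstacle is the invariance verification for $\Gamma_2$, where injectivity of $\varphi$ is used in an essential way (without it the decomposition fails and the lemma is false, as Lemma~\ref{salam80} attests). The remaining details are mostly bookkeeping: handling the degenerate cases $\Gamma_1=\varnothing$ or $\Gamma_2=\varnothing$ (where $X^{\Gamma_i}$ collapses to a one-point factor and the corresponding lemma is applied vacuously or alone), and checking that basic entourages of the product uniform structure $\mathcal U$ are respected by the chosen finite sets in each factor, both of which are routine.
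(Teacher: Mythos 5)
Your proposal is correct and follows essentially the same route as the paper: the forward direction via Lemma~\ref{salam80}, and the converse via the decomposition $\Gamma=Per(\varphi)\sqcup(\Gamma\setminus Per(\varphi))$ together with Lemmas~\ref{salam90} and~\ref{salam100}. The only difference is that the paper merely invokes the fact that a product of two systems with the uniform stroboscopical property again has it, whereas you spell out that fact by iterated subsequence extraction (and also verify the $\varphi$-invariance of $\Gamma\setminus Per(\varphi)$, which the paper leaves implicit); both of these are welcome but not substantive departures.
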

  \begin{proof}
  If  $(X^\Gamma,\sigma_\varphi)$ has uniform stroboscopical property, then it has stroboscopical property and by Lemma~\ref{salam80}
  the map $\varphi:\Gamma\to\Gamma$ is one--to--one.
  \\
  Now suppose $\varphi:\Gamma\to\Gamma$ is one--to--one. If either all points
  of $\Gamma$ are periodic points of $\varphi$ or $\varphi$ does not have any periodic
  point, then we are done by Lemmas~\ref{salam90} and~\ref{salam100}. Otherwise
  $Per(\varphi)$ (the collection of all periodic points of $\varphi$) and 
  $\Gamma\setminus Per(\varphi)$ are two nonempty subsets of $\Gamma$.
  By Lemmas~\ref{salam90} and~\ref{salam100} both dynamical systems
$(X^{Per(\varphi)},\sigma_{\varphi\restriction_{Per(\varphi)}})$ and
$(X^{\Gamma\setminus Per(\varphi)},
\sigma_{\varphi\restriction_{\Gamma\setminus Per(\varphi)}})$ have
uniform stroboscopical property. In order to complete the proof use the fact that if both
dynamical systems $(Z,f)$ and $(Y,h)$ have uniform stroboscopical property, then
$(Z\times Y,f\times h)$ (where $(f\times h)(u,v)=(f(u),h(v))$ for $(u,v)\in Z\times Y$)
has uniform stroboscopical property.
  \end{proof}
\begin{note} 
Using Lemmas~\ref{salam80} and Theorem~\ref{salam200},
for finite discrete topological space $X$ with at least two elements,
 nonempty set $\Gamma$, and arbitrary map $\varphi:\Gamma\to\Gamma$ the following
 statements are equivalent:
 \begin{itemize}
 \item  $(X^\Gamma,\sigma_\varphi)$ has uniform stroboscopical property,
 \item  $(X^\Gamma,\sigma_\varphi)$ has stroboscopical property,
 \item $\varphi:\Gamma\to\Gamma$ is one--to--one.
 \end{itemize}
\end{note}
\begin{note}
For countable $\Gamma$, by \cite[Proposition 5]{LS03} and \cite{ANT13}, 
$(X^\Gamma,\sigma_\varphi)$ has the strong stroboscopical property
if and only if $\varphi:\Gamma\to\Gamma$ is one--to--one without periodic points.
\end{note}
\begin{theorem}\label{AF30}
The following statements are equivalent:
\begin{itemize}
\item[a.] $(F,\h)$ has uniform stroboscopical property,
\item[b.] $(F,\h)$ has stroboscopical property,
\item[c.] $Per(\h)=F$.
\end{itemize}
\end{theorem}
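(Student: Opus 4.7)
The plan is to cycle $(a)\Rightarrow(b)\Rightarrow(c)\Rightarrow(a)$. The first implication is essentially immediate from the definitions: if $\{\h^{k_i}\circ\varrho\}$ converges uniformly (hence pointwise) to $id_F$ along a subsequence $B=\{k_i\}$ of $A$, then $\h^{k_i}(\varrho(z))\to z$ for each $z\in F$, witnessing $\varrho(z)\in\{x:z\in\omega_{\h}(A,x)\}$ and making the latter set nonempty.

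For $(b)\Rightarrow(c)$, I would argue by contrapositive: assuming some $z\in F$ fails to be periodic, I produce a sequence $A$ for which $\{x:z\in\omega_{\h}(A,x)\}$ is empty. When $z\neq b$, convergence in the Fort topology to $z$ forces a sequence to be eventually equal to $z$, so $z\in\omega_{\h}(A,x)$ would demand $\h^{n_i}(x)=z$ for infinitely many $i$; two such equalities $\h^{n_i}(x)=\h^{n_j}(x)=z$ with $n_i<n_j$ would then yield $\h^{n_j-n_i}(z)=z$, contradicting $z\notin Per(\h)$. When $z=b$, Remark~\ref{fort10} forces $\h(b)\neq b$ (else $b$ is fixed, hence periodic) and $F\setminus\h^{-1}(\h(b))$ finite, which makes $\h(F)$ finite; the decreasing chain $\{\h^n(F)\}$ therefore stabilizes at a finite set $C$ on which $\h$ acts bijectively, so $C\subseteq Per(\h)$ and $b\notin C$. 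Taking $A$ with first term large enough that $\h^{n_1}(F)\subseteq C$, the sequence $\{\h^{n_i}(x)\}$ lies in the finite set $C\not\ni b$, from which no subsequence can converge to $b$.

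For $(c)\Rightarrow(a)$, assume $Per(\h)=F$, so $F$ decomposes into finite $\h$-cycles; write $p_y$ for the $\h$-period of $y$. Given a strictly increasing $A$, apply Lemma~\ref{salam110}(1) to extract a subsequence $\{n_i\}$ together with a map $f:\mathbb{N}\to\mathbb{N}$ satisfying $f(m)\leq m$ and $n_i\equiv f(m)\pmod m$ for all $i\geq m$. Define
\[\varrho(y):=\h^{p_y-f(p_y)}(y),\]
so that $\h^{n_i}(\varrho(y))=y$ whenever $i\geq p_y$. To verify $\{\h^{n_i}\circ\varrho\}\to id_F$ uniformly, fix $D\in\mathcal{K}$ containing some $\gamma_H$ with $H\subseteq F\setminus\{b\}$ finite, and set $N:=\max\{p_w:w\in H\}$. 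For $i\geq N$: if $y\in H$ then $i\geq p_y$ and $\h^{n_i}(\varrho(y))=y\in H$; if $y\notin H$ but its orbit meets $H$, then $p_y\leq N$ (the period is constant along an orbit) and again $\h^{n_i}(\varrho(y))=y\notin H$; if $y\notin H$ and its orbit avoids $H$, then $\h^{n_i}(\varrho(y))$ remains in that orbit, hence outside $H$. In every case $(y,\h^{n_i}(\varrho(y)))\in\gamma_H\subseteq D$.

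The principal obstacle is the $z=b$ case of $(b)\Rightarrow(c)$: the orbit-counting trick that dispatches $z\neq b$ is unavailable for $b$, since a subsequence can converge to $b$ merely by wandering through infinitely many distinct values without ever hitting $b$. The workaround is to exploit Remark~\ref{fort10} to force $\h(F)$ to be finite, collapsing the dynamics into a finite attractor $C$ which must miss $b$ once $b$ is assumed non-periodic; pigeonhole in $C$ then prevents any subsequence from escaping to $b$. The remainder of the argument is routine bookkeeping with the periods $p_y$ and the divisibility extracted in Lemma~\ref{salam110}.
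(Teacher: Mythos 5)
Your proof is correct and follows essentially the same route as the paper: the implication (a)$\Rightarrow$(b) is immediate, the case $z\neq b$ is handled exactly as in the paper by the eventual constancy of any sequence converging to an isolated point, and (c)$\Rightarrow$(a) uses the paper's construction verbatim in substance --- the arithmetic-progression subsequence of Lemma~\ref{salam110}(1) together with $\varrho(y)=\h^{p_y-f(p_y)}(y)$ --- differing only in how you sort the pairs into $\gamma_H$ (your trichotomy on whether the orbit of $y$ meets $H$ versus the paper's dichotomy on whether the period exceeds $N$). The one place you genuinely diverge is the point $b$ in (b)$\Rightarrow$(c): the paper first establishes $F\setminus\{b\}\subseteq Per(\h)$, notes that $\h$ is then injective off $b$, and invokes Remark~\ref{fort10} to force $\h(b)=b$; you instead argue by contrapositive, using Remark~\ref{fort10} to make $\h(F)$ finite when $\h(b)\neq b$, so that the iterated images stabilize to a finite invariant set $C\subseteq Per(\h)$ with $b\notin C$, and then exhibit a sequence $A$ along which no orbit can accumulate at $b$. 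Your version is self-contained (it does not feed the isolated-point conclusion into the $b$ case) at the cost of a slightly longer argument; both are valid.
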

\begin{proof} (b) $\Rightarrow$ (c).
Suppose $(F,\h)$ has stroboscopical property and $z$  is an isolated point of $F$,
then there exists $x\in F$ and $\{n\}_{n\geq0}$ has a subsequence like $\{n_k\}_{k\geq0}$ such that
${\displaystyle\lim_{k\to\infty}\h^{n_k}(x)}=z$, hence there exists $N\geq1$ such that:
\[\forall k\geq N\:(\h^{n_k}(x)=z)\:,\]
in particular $\h^{n_N}(x)=z=\h^{n_{N+1}}(x)$, thus $\h^{n_{N+1}-n_N}(z)=z$ and $z\in Per(\h)$.
\\
On the other hand, if $z$  is limit point of $F$, then $F$ is infinite and $z=b$. Since
$F\setminus\{b\}\subseteq Per(\h)$, $\h\restriction_{F\setminus\{b\}}:F\setminus\{b\}\to F$
is one-to-one which leads to $\h(b)=b$ by Remark~\ref{fort10}.
\\
(c) $\Rightarrow$ (a).
Suppose $Per(\h)=F$ and $A$ is an arbitrary
increasing sequence in $\mathbb N$. The subsequences $A_1,A_2,\ldots$ of  
of $A$ defined inductively as follows:
\begin{itemize}
\item let $A_1:=A=\{n^1_k\}_{k\geq0}$
	and $r_1=0$, 
\item suppose for 
$t\geq1$, $A_t=\{n^t_k\}_{k\geq0}$ is a subsequence of $A$ and $r_t\in\{0,\ldots,t-1\}$ such that 
for all $k\geq0$, $n^t_k\mathop{\equiv}\limits^{\mod t} r_t$; 
then there exists $r_{t+1}\in\{0,\ldots,t\}$ and subsequence
$A_{t+1}=\{n^{t+1}_k\}_{k\geq0}$ of $A_t$ such $n^{t+1}_k\mathop{\equiv}\limits^{\mod t+1} r_{t+1}$.
\end{itemize}
Then $B:=\{n_k^k\}_{k\geq1}$ is a subsequence of $A$ with
\[\forall t\geq1 \forall k\geq t\:(n_k^k\mathop{\equiv}\limits^{\mod t} r_t)\:.\]
For $z\in F$ let $m_z:=\min\{m\geq1:\h^m(z)=z\}$. Consider
$\varrho:F\to F$ with $\varrho(z)=\h^{m_z-r_{m_z}}(z)$ ($z\in F$). Then for all
$z\in Z$ and $k\geq m_z$, $\h^{n_k^k}(\varrho(z))=z$. For $\alpha\in \mathcal{K}$
there exists a finite subset $A$ of $F\setminus\{b\}$ with $\gamma_A\subseteq\alpha$.
Let $N=\max(\{m_z:z\in A\}\cup\{m_b\})$, for all $k\geq N$ and $z\in F$.
\begin{itemize}
\item $m_z\leq N$: in this case $\h^{n_k^k}(\varrho(z))=z$, and $(\h^{n_k^k}(\varrho(z)),z)=(z,z)\in\alpha$.
\item $m_z> N$: note that for all $i\geq0$, $m_z=m_{\h^i(z)}$, thus for all
	$n\geq N$ we have $m_{\h^{n_k^k}(\varrho(z))}=m_{\h^{n_k^k+m_z-r_{m_z}}(z)}=m_z>N$
	therefore $z, \h^{n_k^k}(\varrho(z))\notin A$ and $(\h^{n_k^k}(\varrho(z)),z)
	\in\gamma_A\subseteq\alpha$.
\end{itemize}
Regarding the above two cases we have $(\h^{n_k^k}(\varrho(z)),z)=(\h^{n_k^k}(\varrho(z)),id_F(z))\in 
\alpha$ for all $k\geq N$. Thus $\{\h^{n_k^k}\circ\varrho\}_{k\geq1}$ converges uniformly
to $id_F:F\to F$ and $(F,\h)$ has uniform stroboscopical property.
\end{proof}
\begin{theorem}
The dynamical system $(F,\h)$ has strongly stroboscopical property if and only if $F=\{b\}$.
\end{theorem}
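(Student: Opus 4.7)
The plan is to prove the nontrivial direction (the other being immediate) by contradiction, exploiting how restrictive density is in a Fort space. I would first observe that strongly stroboscopical implies stroboscopical (since a dense subset of the nonempty space $F$ is nonempty), so Theorem~\ref{AF30} already gives $Per(\h)=F$; in particular every point of $F$ is $\h$-periodic.

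The key topological observation is that in the Fort space $F$, every point $y\in F\setminus\{b\}$ is isolated: the singleton $\{y\}$ is open because it does not contain $b$. Consequently any dense subset of $F$ must contain all of $F\setminus\{b\}$. This converts the density requirement in the definition of strongly stroboscopical property into a universally quantified pointwise statement over $F\setminus\{b\}$.

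Now I would assume $F\neq\{b\}$, pick $x_0\in F\setminus\{b\}$ of period $p\geq 1$, and test the property on the arithmetic sequence $A=\{kp\}_{k\geq 1}$. Since $\h^{kp}(x_0)=x_0$ for every $k$, we have $\omega_\h(A,x_0)=\{x_0\}$. Applied with an arbitrary $z\in F$ and this sequence, strong stroboscopicality says $\{x\in F:z\in\omega_\h(A,x)\}$ is dense and therefore contains $x_0$; hence $z\in\omega_\h(A,x_0)=\{x_0\}$, forcing $z=x_0$. Choosing $z=b$ then yields $b=x_0$, contradicting $x_0\in F\setminus\{b\}$, so $F=\{b\}$.

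I do not anticipate a real obstacle here: the main point is recognizing that density in a Fort space collapses into ``contains $F\setminus\{b\}$'', after which matching the sequence $A$ to the period of any hypothetical non-$b$ point makes the $\omega$-set shrink to a single orbit representative and forces $F$ to be a singleton.
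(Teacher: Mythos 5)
Your proof is correct and takes essentially the same route as the paper's: both exploit that a dense subset of a Fort space must contain every isolated point, apply the strong stroboscopical property along the sequence of multiples of the period of an isolated point $x$, and derive a contradiction from $\omega_\h(A,x)=\{x\}$. The only cosmetic difference is that the paper tests against a second isolated point $y$ whereas you test against $z=b$.
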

\begin{proof}
Suppose $(F,\h)$ has strongly stroboscopical property, if $F\neq\{b\}$, then $F$ has at least two
(distinct) isolated points like $x,y$. There exists $m\geq1$ with $\h^m(x)=x$, by
Theorem~\ref{AF30}, since $(F,\h)$ has strongly stroboscopical property and $x$
is an isolated point of $F$, $y\in\omega_\h(\{mn\}_{n\geq0},x)=\{x\}$ which is
in contradiction with $y\neq x$, thus $F=\{b\}$.
\end{proof}
\section{Two diagrams}
\noindent Let's recall that inthe paper,  $X$ is a finite discrete space with at least two
elements, $\Gamma$ is nonempty, $\varphi:\Gamma\to\Gamma$ is  an arbitrary map, $F$
is a Fort space with particular point $b$, and $\h:F\to F$ is continuous. So according to the previous
sections, we get the following table:
{\scriptsize \begin{center}
\begin{tabular}{l|c|c|}
	\begin{tabular}{lr}
	& $(Z,f)$ 
	\\ \hline
	$\rho\SP\SP\SP\SP\SP$ & 
	\end{tabular} & $(X^\Gamma,\sigma_\varphi)$ & $(F,\h)$  \\ \hline 
	\begin{tabular}{l}
	almost weak specification \\ \hline
	weak specification
	\end{tabular}
& $Per(\varphi)=\varnothing$ & $\bigcap\{\h^n(F):n\geq1\}$ is singleton \\ \hline
	\begin{tabular}{l}
	uniform stroboscopical \\ \hline
	stroboscopical
	\end{tabular}
& $\varphi$ is one-to-one & $Per(\h)=F$ \\ \hline
	\begin{tabular}{l}
	specification \\ \hline
	strongly stroboscopical
	\end{tabular}
& \begin{tabular}{c} $Per(\varphi)=\varnothing$  and \\ $\varphi$ is one-to-one \end{tabular}
& $F=\{b\}$ \\ \hline \end{tabular}
\\ $\:$ \\
{\bf Table A}
\end{center}}
\noindent Where the dynamical system $(Z,f)$ has $\rho$ property if and only if it has the phrase
occurs in the corresponding case.
\\
Using Table A, we have the following diagrams:
{\scriptsize \begin{center}

\unitlength .58mm 
\linethickness{0.4pt}
\ifx\plotpoint\undefined\newsavebox{\plotpoint}\fi 
\begin{picture}(178.75,79.5)(0,0)
\put(4,23.5){\framebox(118,56)[]{}}
\put(44.25,3){\framebox(134.5,57)[]{}}
\put(56.5,69){\makebox(0,0)[cc]
{$\:\:\:(X^\Gamma,\sigma_\varphi)$ has (almost) weak specification property}}
\put(114,10.25){\makebox(0,0)[cc]
{$(X^\Gamma,\sigma_\varphi)$ has (uniform) stroboscopical property}}
\put(85.5,30.25){\makebox(0,0)[cc]{(C3)}}
\put(16.75,37.25){\makebox(0,0)[cc]{(C1)}}
\put(153,39.5){\makebox(0,0)[cc]{(C2)}}
\put(83,50.5){\makebox(0,0)[cc]
{$(X^\Gamma,\sigma_\varphi)$ has specification property}}
\put(82.75,44.25){\makebox(0,0)[cc]{(strongly stroboscopical property)}}
\end{picture}
\end{center}}
\noindent Where (Ci) denotes the counterexample $(X^{\mathbb Z},\sigma_{\varphi_i})$ for:
\begin{itemize}
\item $\varphi_1:{\mathbb Z}\to{\mathbb Z}$ with $\varphi_1(n)=n^2+1$ ($n\in{\mathbb Z}$),
\item $\varphi_2:{\mathbb Z}\to{\mathbb Z}$ with $\varphi_2(n)=-n$ ($n\in{\mathbb Z}$),
\item $\varphi_3:{\mathbb Z}\to{\mathbb Z}$ with $\varphi_3(n)=n+1$ ($n\in{\mathbb Z}$).
\end{itemize}
And:
{\scriptsize \begin{center}
\unitlength .6mm 
\linethickness{0.4pt}
\ifx\plotpoint\undefined\newsavebox{\plotpoint}\fi 
\begin{picture}(182.75,76.5)(0,0)
\put(47,59.75){\makebox(0,0)[cc]
{$\SP\SP\SP(F,\h)$ has (almost) weak specification property}}
\put(112.75,11.25){\makebox(0,0)[cc]{$(F,\h)$ has (uniform) stroboscopical property}}
\put(78.75,41.25){\makebox(0,0)[cc]{$(F,\h)$ has specification property}}
\put(78.5,35){\makebox(0,0)[cc]{(strongly stroboscopical property)}}
\put(78,27.5){\makebox(0,0)[cc]{$F=\{b\}$}}
\put(12.5,35.5){\makebox(0,0)[cc]{(D1)}}
\put(14,12.5){\makebox(0,0)[cc]{(D2)}}
\put(158.5,39.25){\makebox(0,0)[cc]{(D3)}}
\put(43.25,6.5){\framebox(134.5,44.25)[]{}}
\put(4.25,25.25){\framebox(111.5,47)[]{}}
\put(2,2.25){\framebox(180.75,74.25)[]{}}
\end{picture}
\end{center}}
\noindent Where (Di) denotes the counterexample $(\{\frac{\pm 1}{n}:n\in{\mathbb Z}\setminus\{0\}\}\cup\{0\},\h_i)$ for (consider $\{\frac{\pm 1}{n}:n\in{\mathbb Z}\setminus\{0\}\}\cup\{0\}$ with induced topology from Euclidean line $\mathbb R$):
\[\begin{array}{c}
\h_1(x)=\left\{\begin{array}{lc}  \frac{x}{x-1} &x\neq1\: , 
\\ 0 & x=1 \: , \end{array} \right.
\: \:\:\:\:\:
\h_2(x)=\left\{\begin{array}{lc} \frac{x}{x-1} & x\neq 1 \: , 
\\ -1 & x=1\: ,  \end{array} \right.
\\ \\
\h_3(x)=-x\:(x\in \{\frac{\pm 1}{n}:n\in{\mathbb Z}\setminus\{0\}\}\cup\{0\})\:.
\end{array}\]

{\scriptsize
\noindent {\bf Fatemah Ayatollah Zadeh Shirazi},  Faculty of Mthematics, Statistics and Computer Science, College of Science, University of Tehran, Enghelab Ave., Tehran, Iran (fatemah@khayam.ut.ac.ir)
\\
{\bf Zahra Nili Ahmadabadi},  Islamic Azad University, Science and Research Branch, 
Tehran, Iran 
\linebreak
(zahra.nili.a@gmail.com)
\\
{\bf Bahman Taherkhani}, Department of Mathematics, Payame Noor University, P.O.Box 19395-3697,
Tehran, Iran (bahman.taherkhani@pnu.ac.ir)
\\
{\bf Khosro Tajbakhsh}, Department of Mathematics, Faculty of Mathematical Sciences, Tarbiat Modares University,
Tehran 14115-134, Iran (khtajbakhsh@modares.ac.ir , arash@cnu.ac.kr)
}
\end{document}